\documentclass[12pt]{amsart}
\usepackage{a4wide,enumerate,xcolor}
\usepackage{amsmath,graphicx,comment,enumerate}
\usepackage{mathtools} 
\usepackage{esint} 
\allowdisplaybreaks

\usepackage{enumitem}
\setlist[itemize]{label={$\bullet$}, leftmargin=35pt, itemsep=3pt}

\let\pa\partial
\let\na\nabla
\let\eps\varepsilon
\newcommand{\N}{{\mathbb N}}
\newcommand{\R}{{\mathbb R}}
\newcommand{\diver}{\operatorname{div}}

\newtheorem{theorem}{Theorem}
\newtheorem{lemma}[theorem]{Lemma}
\newtheorem{proposition}[theorem]{Proposition}
\newtheorem{remark}[theorem]{Remark}

\begin{document}

\title[Superlinear nonlocal multispecies population systems]{Superlinear nonlocal diffusion systems \\ 
for multispecies populations: \\
well-posedness and discrete chain rules}


\author[P. Hirvonen]{Peter Hirvonen}
\address{Institute of Analysis and Scientific Computing, TU Wien, Wiedner Hauptstra\ss e 8--10, 1040 Wien, Austria}
\email{peter.hirvonen@tuwien.ac.at}  

\author[A. J\"ungel]{Ansgar J\"ungel}
\address{Institute of Analysis and Scientific Computing, TU Wien, Wiedner Hauptstra\ss e 8--10, 1040 Wien, Austria}
\email{juengel@tuwien.ac.at} 

\author[A. Pollino]{Annamaria Pollino}
\address{Institute of Analysis and Scientific Computing, TU Wien, Wiedner Hauptstra\ss e 8--10, 1040 Wien, Austria}
\email{annamaria.pollino@tuwien.ac.at}

\date{\today}

\thanks{The authors acknowledge partial support from the Austrian Science Fund (FWF), grant 10.55776/PAT2687825, and from the Austrian Federal Ministry for Women, Science and Research and implemented by \"OAD, project MULT09/2025. This work has received funding from the European Research Council (ERC) under the European Union's Horizon 2020 research and innovation programme, ERC Advanced Grant NEUROMORPH, no.~101018153. For open-access purposes, the authors have applied a CC BY public copyright license to any author-accepted manuscript version arising from this submission.} 

\begin{abstract}
A nonlocal diffusion system describing the dynamics of multi-species populations and approximating the Shigesada--Kawasaki--Teramoto (SKT) model is analyzed in a bounded domain. The model consists of a system of integro-differential equations of Andreu--Maz\'on--Rossi--Toledo type, in which a nonlocal cross-diffusion operator acts on nonlinear diffusion potentials. We establish the global existence and uniqueness of strong solutions. The existence proof relies on suitable a priori estimates derived from an entropy inequality, whose proof requires the development of novel discrete chain-rule inequalities. As a by-product, these inequalities provide the basis for the construction of structure-preserving finite-volume schemes for the corresponding local SKT system. Uniqueness of solutions is established by means of a duality argument. Finally, one-dimensional numerical simulations illustrate the nonlocal-to-local limit and the qualitative behavior of solutions for superlinear and sublinear diffusion potentials.
\end{abstract}

\keywords{Nonlocal diffusion, integro-differential equations, population models, existence and uniqueness of strong solutions, discrete chain rules, finite-volume method.}  
 
\subjclass[2000]{35A02, 35K51, 35K55, 35Q92.}

\maketitle


\section{Introduction}

Nonlocal diffusion models have become an important framework for describing dispersal processes in population dynamics, particularly in situations where individuals are able to perform long-range movements that cannot be adequately represented by classical local diffusion. While a substantial body of literature has focused on nonlocal diffusive differential operators \cite{CHS18,DEF18,JPZ22}, a different class of nonlocal diffusion equations, systematically studied in \cite{AMRT10}, has attracted increasing attention due to its distinctive mathematical structure and its close connection with nonlocal interaction mechanisms. Here, diffusion is governed by integral operators that depend on pairwise differences of the unknown function, instead of second-order derivatives, leading to a nonlinear and genuinely nonlocal evolution. In this paper, we study systems of such nonlocal diffusion equations approximating the Shigesada--Kawasaki--Teramoto (SKT) population model with superlinear pressure functions \cite{CDJ18}. Our main objective is to establish the global existence and uniqueness of strong solutions to these models. The main technical novelty of our approach is the development of new discrete chain-rule inequalities. Besides providing the a priori estimates needed for the existence analysis, they also form the basis for structure-preserving two-point flux approximation finite-volume schemes for SKT systems.

\subsection{Model equations}

The dynamics of the population species with densities $u_1,\ldots,u_n$ is supposed to be governed by the equations
\begin{align}
  \pa_t u_i(t,x) = \int_\Omega J(x-y)\big(P_i(u(t,y))
  - P_i(u(t,x))\big)dy + u_i(t,x)f_i(u(t,x)) \label{1.eq}
\end{align}
for $x\in\Omega$, $t>0$, $i=1,\ldots,n$, and $u=(u_1,\ldots,u_n)$, where $\Omega\subset\R^d$ ($d\ge 1$) is a bounded domain and $J:\R^d\to[0,\infty)$ is the diffusion kernel. The initial conditions equal
\begin{align}\label{1.ic}
  u_i(0,\cdot) = u_i^0\quad\mbox{in }\Omega,\ i=1,\ldots,n,
\end{align}
the diffusion functions are given by 
\begin{align}
  P_i(u) = u_ip_i(u), \quad 
  p_i(u) = a_{i0} + \sum_{j=1}^n a_{ij} u_j^s, \quad i=1,\ldots,n,
  \ s>1, \label{1.p}
\end{align}
and the source terms are of Lotka--Volterra type with
\begin{align}
  f_i(u) = b_{i0} - \sum_{j=1}^n b_{ij}u_j, \quad i=1,\ldots,n,
  \label{1.f}
\end{align}
where $a_{i0}$, $a_{ij}$, $b_{i0}$, $b_{ij}\ge 0$ (competition case). This model was investigated in \cite{GaVe19} in the special case $s=1$ and $n=2$. We generalize the system to the superlinear case $s>1$ and an arbitrary species number $n\in\N$.

Equations \eqref{1.eq} can be interpreted as a nonlocal approximation of the SKT equations
\begin{align}\label{1.skt}
  \pa_t u_i = \Delta P_i(u) + u_if_i(u)\quad\mbox{in }\Omega,
  \quad \na P_i(u)\cdot\nu = 0 \quad\mbox{on }\pa\Omega,\ t>0,
\end{align}
where $\nu$ denotes the exterior unit normal vector to $\pa\Omega$ and $i=1,\ldots,n$. This model was proposed in \cite{SKT79}, derived from stochastic interacting many-particle systems in \cite{CDHJ21}, and mathematically analyzed in \cite{CDJ18}. The diffusion $\Delta P_i(u) = \diver(p_i(u)\na u_i) + \diver(u_i\na p_i(u))$ consists of the sum of the diffusion with coefficient $p_i(u)$ and a contribution due to the mass flux $-u_iv_i$ with the velocity $v_i = -\na p_i(u)$. By Darcy's law, we can interpret $p_i(u)$ as the pressure of the $i$th species. The connection between the nonlocal system \eqref{1.eq} and the local system \eqref{1.skt} is established through the scaling
\begin{align*}
  J_\eps(z) = \frac{K}{\eps^{d}}J\bigg(\frac{z}{\eps}\bigg),
  \quad\mbox{where }K^{-1} = \frac12\int_{\R^d}J(z)z_d^2 dz,
\end{align*}
for normalized kernel functions $J$ and the property \cite{AMRT10}
\begin{align*}
  \frac{1}{\eps^2}\int_\Omega J_\eps(x-y)g(x)dy \to \Delta g(x)
  \quad\mbox{as }\eps\to 0.
\end{align*}
The localization limit was proved for $s=1$ and $n=2$ in \cite{GaVe22} using duality estimates, whose extension to $s\neq 1$ is an open problem. Numerical experiments in one space dimension (see Section \ref{sec.numer}) strongly support the validity of the limit, suggesting that the main obstacles are of a technical nature.

Although nonlocal diffusion equations of the form \eqref{1.eq} generally lack any regularizing effects characteristic of local diffusion operators, they offer several advantages. First, they naturally capture long-range dispersal mechanisms, which are often more realistic in biological applications than purely local diffusion. Second, they admit a stronger well-posedness theory: We establish the existence of bounded strong solutions to \eqref{1.eq}, whereas the available analytical techniques for the corresponding local SKT system \eqref{1.skt} are, in general, limited to proving the existence of weak solutions.

There exist other nonlocal versions of the SKT model. For instance, the authors of \cite{DiMo24} consider convolution-type operators $\Delta(u_i(u_j*J))$ and study their entropy structure. The localization limit in these models was rigorously performed in \cite{Mou20}, assuming a triangular matrix $(a_{ij})$. A related localization limit to a population system without self-diffusion was proved in \cite{BuEs23,DHPP24}. 

\subsection{Key ideas} 

The analysis of the local SKT system \eqref{1.skt} with $s=1$ is based on the fundamental observation that the Boltzmann entropy density $h_B(u)=\sum_{i=1}^n u_i(\log u_i-1)$ yields the crucial a priori estimates required for the existence theory \cite{CDJ18}. Remarkably, the same functional remains effective in the nonlocal setting \eqref{1.eq} with $s=1$, providing analogous estimates \cite{GaVe19}. This is possible under the condition that the coefficient matrix $(a_{ij})_{i,j=1}^n$ is symmetric positive definite. In fact, it is sufficient that $(a_{ij})$ is positively stable and fulfills the detailed-balance condition $\pi_i a_{ij}=\pi_j a_{ji}$ for all $i\neq j$ and some $\pi_i>0$; see \cite{CDJ18}. When $s>1$, the Boltzmann entropy density has to be replaced by the Tsallis entropy density \cite{Tsa88}
\begin{align*}
  h(u) = \sum_{i=1}^n h_i(u_i) = \sum_{i=1}^n\frac{u_i^s-u_i}{s-1}.
\end{align*}
Observe that $h(u)$ converges to the Boltzmann entropy density as $s\to 1$. It was proved in \cite{CDJ18}, still for the local SKT model, that this entropy provides a priori estimates if the detailed-balance condition is satisfied and self-diffusion dominates cross-diffusion in the sense
\begin{align}\label{1.a1}
  Z_i := a_{ii} - \frac{s-1}{s+1}\sum_{j=1,\,j\neq i}^n a_{ij} > 0
  \quad\mbox{for }i=1,\ldots,n.
\end{align}
Indeed, a computation shows that, along smooth solutions to the local system \eqref{1.skt} \cite[Lemma 8]{CDJ18},
\begin{align*}
  \frac{d}{dt}\int_\Omega h(u)dx + c\sum_{i=1}^n a_{i0}
  \int_\Omega |\na u_i^{s/2}|^2 dx 
  + c\sum_{i=1}^n Z_i\int_\Omega |\na u_i^{s}|^2 dx 
  \le C + C\int_\Omega h(u)dx,
\end{align*}
for some constants $c>0$ and $C>0$. By Gronwall's inequality, this yields gradient bounds for $u_i^{s}$ (and $u_i^{s/2}$ if $a_{i0}>0$), essential for the compactness argument. 

Our aim is to derive an analogous entropy inequality for solutions to the nonlocal system \eqref{1.eq}. The main difficulty lies in identifying a suitable discrete counterpart of the chain rule $\na u_i^s = (s/(s-1))u_i\na u_i^{s-1}$. Although this problem has been successfully addressed in the finite-volume setting through the introduction of appropriate mean functions \cite{JuZu21}, this approach does not extend to the present nonlocal model. Our idea is to derive {\em inequalities}, which replace the chain-rule {\em equalities}; see Lemmas \ref{lem.chain1}--\ref{lem.chain2}. We prove in Lemma \ref{lem.aei1} that 
the following entropy inequality holds:
\begin{align*}
  \frac{d}{dt}&\int_\Omega h(u)dx
  + \frac{2}{s}\sum_{i=1}^n a_{i0}\int_\Omega\int_\Omega J(x-y)
  \big(u_i(x)^{s/2}-u_i(y)^{s/2}\big)^2 dydx \\
  &\phantom{xx}+ \frac{s}{8(s-1)C(s)}\sum_{i=1}^n
  \bigg(a_{ii} - C(s)\sum_{j=1, j\neq i}^n a_{ij}\bigg)
  \int_\Omega\int_\Omega J(x-y)\big(u_i(x)^s-u_i(y)^s\big)^2 dydx \\
  &\le C + C\int_\Omega h(u)dx,
\end{align*}
where $C>0$ only depends on the data. After an application of Gronwall's inequality, it provides uniform bounds if $a_{i0}\ge 0$ and the weak cross-diffusion condition
\begin{align}\label{1.a}
  \kappa(s) := \min\bigg\{a_{ii} - C(s)\sum_{j=1, j\neq i}^na_{ij}\bigg\}
  > 0\quad\mbox{for }i=1,\ldots,n,
\end{align}
holds, where
\begin{align}\label{1.Cs}
  C(s) = \begin{cases}
  \displaystyle \frac{s^2}{2(s+1)(s-1)} &\mbox{if }1<s<2, \\[10pt]
  \displaystyle \frac{4(s-1)^2+1}{4(s+1)(s-1)}
  &\mbox{if }s\ge 2.
  \end{cases}
\end{align}
A similar inequality can be proved if instead of \eqref{1.a} the matrix $(a_{ij})$ is positive definite. This yields an $L^s(\Omega)$ bound for $u_i$ (uniform in time), from which we are able to derive an $L^\infty(\Omega)$ estimate and eventually an $W^{1,\infty}(\Omega)$ bound for $u_i$. Our approximation scheme consists of the implicit Euler discretization and a regularization of the kernel. The $W^{1,\infty}(\Omega)$ bound and a bound on the discrete time derivative allow us to apply the Aubin--Lions compactness lemma to conclude the strong convergence of a subsequence of the approximating solutions. 

Condition \eqref{1.Cs} is more restrictive than \eqref{1.a1}. This is a consequence of the discrete chain-rule inequalities employed in our analysis, which are generally not optimal compared with their continuous counterparts (see, e.g., \eqref{2.ineq1} below). It is unlikely that these inequalities can be sharpened to recover the same factors as in the corresponding continuous chain rules.

A noteworthy feature of our analysis is that the constructed solution to \eqref{1.eq}--\eqref{1.ic} is bounded. This is in contrast to the local SKT system \eqref{1.skt}, for which the boundedness of solutions has been established only in certain special cases \cite[Theorem 1]{JuZa16}. Moreover, the nonlocal structure allows us to prove the uniqueness of the strong solution to \eqref{1.eq}--\eqref{1.ic}, while only the weak--strong uniqueness property could be shown for the local SKT system \cite{ChJu19}. 

\subsection{Main results}

We impose the following assumptions:
\begin{itemize}
\item[(A1)] Domain: $\Omega \in \R^d$ ($d\geq 1$) is a bounded domain with Lipschitz boundary.
\item[(A2)] Kernel: $J$ is even, nonnegative, and satisfies $J\in L^\infty(\R^n)\cap BV(\R^n)$ as well as 
\begin{equation}\label{1.meta}
  0<J_*\le \int_\Omega J(x-y)dy \le J^* < \infty
  \quad\mbox{for all }x\in\Omega.
\end{equation}
\item[(A3)] Initial data: $u_i^0\in L^\infty(\Omega)\cap BV(\Omega)$ and $u_i^0\ge 0$ for $=1,\ldots,n$.
\item[(A4)] Data: $a_{i0}$, $a_{ij}$, $b_{i0}$, $b_{ij}\geq 0$ for $i,j=1,\ldots,n$. 
\item[(A5a)] Diffusion matrix: $(a_{ij})_{i,j=1}^n\subset \R^{n\times n}$ is symmetric and satisfies condition \eqref{1.a}.
\item[(A5b)] Diffusion matrix: $(a_{ij})_{i,j=1}^n\subset \R^{n\times n}$ is symmetric positive definite.
\end{itemize}

For given $T>0$, we set $\Omega_T=(0,T)\times\Omega$. 

\begin{theorem}[Global existence]\label{thm.ex}
Let $T>0$ and let Assumptions (A1)--(A4) and either (A5a) or (A5b) hold.
Then there exists a strong solution to \eqref{1.eq}--\eqref{1.ic} satisfying $u_i\ge 0$ in $\Omega_T$,
\begin{align*}
  u_i\in W^{1,\infty}(0,T;L^\infty(\Omega))\cap
  C^0([0,T];L^\infty(\Omega)\cap BV(\Omega)),
\end{align*}
and $u_i$ solves \eqref{1.eq}--\eqref{1.ic} a.e.\ in $\Omega$ and for all $t\in[0,T]$, $i=1,\ldots,n$. Moreover, there exists a constant $C(u^0,T)>0$ such that
\begin{align}
  \int_\Omega& h(u(t))dx
  + \kappa_0\sum_{i=1}^n\int_0^t\int_\Omega\int_\Omega
  J(x-y)\big(u_i(\tau,x)^{s/2}-u_i(\tau,y)^{s/2}\big)^2
  dydxd\tau \nonumber \\
  &+ \kappa_1\sum_{i=1}^n\int_0^t\int_\Omega\int_\Omega
  J(x-y)\big(u_i(\tau,x)^{s}-u_i(\tau,y)^{s}\big)^2
  dydxd\tau \le C(u^0,T). \label{1.ei} 
\end{align}
where $\kappa_0=(2/s)\min_{i=1,\ldots,n}a_{i0}$ and $\kappa_1>0$ depends on $\kappa(s)$, as defined in \eqref{1.a}, if Assumption (A5a) is satisfied, and on $s$ and the smallest eigenvalue of $(a_{ij})$ if Assumption (A5b) holds.
\end{theorem}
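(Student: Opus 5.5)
We follow the strategy outlined in the introduction: construct approximate solutions by an implicit Euler discretization in time combined with a regularization of the kernel. We then derive uniform bounds from the discrete entropy inequality of Lemma \ref{lem.aei1}, upgrade them to $L^\infty$ and $W^{1,\infty}$-in-time bounds, and pass to the limit by compactness.

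\textbf{Step 1: approximate problem.} For $N\in\N$ we set $\tau=T/N$. Given $u^{k-1}\ge 0$ bounded, we solve
\begin{align*}
  \frac{u_i^k-u_i^{k-1}}{\tau} = \int_\Omega J_\delta(x-y)\big(P_i(u^k(y))-P_i(u^k(x))\big)dy + u_i^k f_i(u^k),
\end{align*}
with $J_\delta$ a mollified kernel. The Lotka--Volterra term is truncated, e.g.\ replaced by $(u_i^k)^+ f_i((u^k)^+)$ cut off at a large level $M$. Solvability in $L^\infty(\Omega)^n$ comes from a fixed-point argument (Leray--Schauder or Banach for small $\tau$). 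Here $x\mapsto u_i+\tau P_i(u)$ is monotone and coercive componentwise, and the nonlocal operator with $J_\delta\in L^\infty$ is a bounded, Lipschitz perturbation on bounded sets. Nonnegativity follows by testing with $(u_i^k)^-$, since $P_i(u)$ vanishes when $u_i=0$.

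\textbf{Step 2: uniform estimates.} Testing the scheme with $h_i'(u_i^k)=(s u_i^{s-1}-1)/(s-1)$ and using convexity of $h$ gives the discrete analog of Lemma \ref{lem.aei1}. The key input is the discrete chain-rule inequalities of Lemmas \ref{lem.chain1}--\ref{lem.chain2}, which bound the symmetrized terms $\iint J(x-y)(P_i(u(x))-P_i(u(y)))(u_i(x)^{s-1}-u_i(y)^{s-1})$ from below by the dissipation terms. Under (A5a) the cross terms are absorbed via Young's inequality with constant $C(s)$. Under (A5b) we write the cross terms as a quadratic form in $(u_j^s(x)-u_j^s(y))_j$ and use the smallest eigenvalue. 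A discrete Gronwall argument then yields the bound \eqref{1.ei} uniformly in $\tau,\delta$, and in particular a uniform $L^\infty(0,T;L^s)$ bound.

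Next we obtain an $L^\infty$ bound. At a point $x$ we rewrite the equation as
\begin{align*}
  \pa_t u_i + u_i p_i(u)\int J \le \int J P_i(u(y))dy + b_{i0}u_i .
\end{align*}
The right-hand integral is controlled by $\|J\|_\infty \|P_i(u)\|_{L^1}$, but this is only bounded by $\int u^{s+1}$, not by $\int u^s$. We therefore first need the integrated dissipation bound $\int_0^T\|u_i^s\|_{L^2}^2\,dt$, obtained via \eqref{1.meta} and a Poincaré-type inequality for the nonlocal term, and then an ODE comparison in which superlinear damping ($-J_* a_{ii}u_i^{s+1}$) dominates. This pointwise argument gives $u_i\le M(T)$ independently of the truncation, so the truncation can be removed.

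Bounds in $BV$ follow by taking spatial differences $u(x+h)-u(x)$. The kernel enters through $J\in BV$ and $\int|J(x+h-y)-J(x-y)|dy\le C|h|$, while the local part is handled by the Lipschitz continuity of $P_i$ on $[0,M]^n$ and Gronwall. The time derivative is bounded in $L^\infty$ directly from the equation.

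\textbf{Step 3: limit.} The uniform $L^\infty\cap BV$ bounds and the Lipschitz-in-time bound give, by Aubin--Lions (Helly in space), strong $L^p$ convergence of the piecewise interpolants. Since all nonlinearities are locally Lipschitz on bounded sets, we can pass to the limit in the equation; continuity in time then yields the $C^0$ and $W^{1,\infty}$ regularity. The entropy inequality \eqref{1.ei} follows by lower semicontinuity.

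\textbf{Main obstacle.} The hardest step is the $L^\infty$ bound. Because the nonlocal operator does not regularize, the estimate must come from the pointwise superlinear absorption in $P_i$ combined with an integral bound on $\int J P_i(u(y))dy$. We will first try to show that this integral stays bounded in time using the entropy dissipation and an iterative ODE comparison. If that fails, a Moser-type iteration testing with $u_i^{p}$ is the fallback.
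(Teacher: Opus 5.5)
Your proposal is correct and follows essentially the paper's route: implicit Euler with a regularized kernel, the entropy inequality via the discrete chain rules of Lemmas \ref{lem.chain1}--\ref{lem.chain2}, the $L^{2s}(\Omega_T)$ bound extracted from the dissipation using $J_*$, a pointwise $L^\infty$ bound, gradient/BV bounds from $J\in BV$, and Aubin--Lions compactness. The step you flag as the main obstacle is simpler than you fear. As in the paper, drop the nonpositive term $-P_i(u(x))\int_\Omega J(x-y)\,dy$ and observe that the $L^{2s}(\Omega_T)$ bound already makes $t\mapsto\|P_i(u(t))\|_{L^1(\Omega)}$ integrable on $(0,T)$, since $s+1\le 2s$. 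Gronwall then gives $u_i(t,x)\le e^{b_{i0}T}\big(\|u_i^0\|_{L^\infty(\Omega)}+\|J\|_{L^\infty}\int_0^T\|P_i(u)\|_{L^1(\Omega)}\,dt\big)$, with no need for superlinear damping, a time-uniform bound on $\int_\Omega J(x-y)P_i(u(y))\,dy$, or a Moser iteration.
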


The proof of Theorem \ref{thm.ex} is based on an approximation argument, Banach's fixed-point theorem, uniform bounds from the entropy inequality, and the Aubin--Lions compactness lemma. We do not address the more challenging case $s<1$, as it is currently unclear how to derive suitable discrete chain-rule inequalities in this regime. 

\begin{theorem}[Uniqueness of strong solutions]\label{thm.unique}
Let the assumptions of Theorem \ref{thm.ex} hold and let $u$ and $\bar u$ be two strong solutions. Then $u=\bar u$ a.e.\ in $\Omega_T$.
\end{theorem}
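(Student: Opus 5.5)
Since both solutions are strong, hence bounded, the plan is to compare them through a weighted $L^1$-type duality argument, as the abstract announces. Write $w_i=u_i-\bar u_i$ and set $M:=\max_i(\|u_i\|_{L^\infty(\Omega_T)}+\|\bar u_i\|_{L^\infty(\Omega_T)})<\infty$, which is available from the regularity class $W^{1,\infty}(0,T;L^\infty(\Omega))$ in Theorem \ref{thm.ex}. On $[0,M]^n$ the functions $P_i$ and $u_if_i(u)$ are smooth, so they are Lipschitz there. Subtracting the two equations gives
\begin{align*}
  \pa_t w_i = \int_\Omega J(x-y)\big(D_i(y)-D_i(x)\big)dy + R_i,\qquad
  D_i := P_i(u)-P_i(\bar u),\quad R_i:=u_if_i(u)-\bar u_i\bar f_i,
\end{align*}
with $|R_i|\le L\sum_j|w_j|$.

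A direct approach, which I would try first because the operator is bounded, is a Gronwall estimate in $L^2$ or $L^1$. Multiply by $\operatorname{sgn}(w_i)$ (or by $w_i$) and integrate over $\Omega$. The nonlocal term satisfies
\begin{align*}
  \Big|\int_\Omega\int_\Omega J(x-y)\big(D_i(y)-D_i(x)\big)dy\,\phi(x)dx\Big|
  \le 2J^*\|D_i\|_{L^1}\|\phi\|_{L^\infty}
\end{align*}
by \eqref{1.meta}, using that $J$ is even and $\int_\Omega J(x-y)dx\le J^*$. Since $|D_i|\le L_M\sum_j|w_j|$ pointwise, we get
\begin{align*}
  \frac{d}{dt}\sum_i\|w_i\|_{L^1}\le C(M,J^*)\sum_i\|w_i\|_{L^1}.
\end{align*}
The time derivative is justified because $w_i\in W^{1,\infty}(0,T;L^\infty)$, and the Kato-type identity $\frac{d}{dt}|w_i|=\operatorname{sgn}(w_i)\pa_tw_i$ holds a.e. Gronwall with $w(0)=0$ then gives $w=0$. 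This works because the nonlocal operator is bounded on $L^1$ and the difference of the potentials $P_i$ is controlled pointwise by $|w|$; no monotonicity or sign structure is needed.

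If one prefers to follow the duality method hinted at in the abstract, which exploits the structure more elegantly and would transfer to the local limit, the steps are as follows. Write $D_i=\sum_j A_{ij}w_j$, where $A_{ij}=\int_0^1\pa_jP_i(\theta u+(1-\theta)\bar u)d\theta$ is bounded. Then solve the backward linear dual problem
\begin{align*}
  -\pa_t\phi_i=\sum_jA_{ji}\Big(\int_\Omega J(x-y)\phi_j(y)dy-\int_\Omega J(x-y)dy\,\phi_j(x)\Big)+\sum_j B_{ji}\phi_j+\psi_i,
\end{align*}
with $\phi(T)=0$. This is a linear ODE in $L^\infty(\Omega)^n$ with bounded operator, so it is solvable by Picard iteration. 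Testing the $w$-equation with $\phi$ yields $\int_{\Omega_T}w\cdot\psi=0$ for every $\psi\in L^\infty$, hence $w=0$.

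The main obstacle is merely justifying the pointwise Lipschitz bounds, that is, the $L^\infty$ bound of both solutions. Here I would use the fact that strong solutions are, by definition, bounded; otherwise one would need to derive it from the entropy and the $L^\infty$ estimates of Theorem \ref{thm.ex}. I would present the $L^1$-Gronwall argument as the main proof.
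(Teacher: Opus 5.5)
Your main argument, the $L^1$--Gronwall estimate, is correct, but it is not the route the paper takes. The paper proves uniqueness only by duality, which is essentially your second sketch. It tests the equation for $U=u-\bar u$ with the solution $\phi$ of a backward linear nonlocal problem $\pa_t\phi_i=U_i+B_i[\phi]$, $\phi_i(T)=0$, and concludes $\int_0^T\int_\Omega|U|^2dxdt=0$; $\phi$ is obtained by a Banach fixed-point argument in $L^\infty(0,T_0;L^\infty(\Omega))$.

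Your estimate instead uses two facts directly:
the nonlocal operator is bounded on $L^1(\Omega)$, by the evenness of $J$ together with $\int_\Omega J(x-y)dx\le J^*$;
$P_i$ and $u_if_i(u)$ are Lipschitz on $[0,M]^n$. They are $C^1$ there because $s>1$; ``smooth'' is too strong for non-integer $s$, but this is immaterial.

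This is shorter and yields more. The same computation with different initial data gives the stability estimate $\sum_i\|u_i(t)-\bar u_i(t)\|_{L^1(\Omega)}\le e^{Ct}\sum_i\|u_i^0-\bar u_i^0\|_{L^1(\Omega)}$, where $C$ depends only on $M$, $J^*$ and the coefficients. It uses only the $L^\infty$ bounds and the time regularity of the two solutions, and in particular not the symmetry of $(a_{ij})$.

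The duality method needs exactly the same input: $L^\infty$ bounds on both solutions, so that $G_{ij}$ and the dual operator are bounded. In this nonlocal setting it therefore gains nothing in generality. Its appeal is that it parallels the tools used for local cross-diffusion systems and for the localization limit.

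Your dual problem with the transposed coefficients $A_{ji}$ is also the clean way to do the bookkeeping. The paper reaches the same dual operator by invoking the symmetry of $(a_{ij})$ to exchange $i$ and $j$ in the cross term of $P_i(u)-P_i(\bar u)$. That exchange is valid only after testing with $\phi$ and summing over $i$, not pointwise; your formulation makes this explicit and shows that symmetry is not needed.

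Finally, you can avoid the Kato identity altogether. Write $|w_i(t,x)|\le\int_0^t|\pa_t w_i(r,x)|dr$, bound the integrand pointwise by $\int_\Omega J(x-y)|D_i(r,y)|dy+J^*|D_i(r,x)|+|R_i(r,x)|$, integrate over $\Omega$, and apply Gronwall to the resulting integral inequality.
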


The idea of the proof is to use the duality method. More precisely, we first derive the equations satisfied by the difference of two strong solutions $u$ and $\bar u$ and then test it with the solution of an appropriate linear dual problem associated with these solutions, following the approach of \cite[Theorem 5]{GaVe19}. This yields directly $\int_\Omega(u-\bar u)(t)^2dx=0$, from which we conclude that $u(t)=\bar u(t)$ a.e.\ in $\Omega$. 

The long-time behavior of solutions is widely an open problem. Applying the relative entropy relative to the constant steady state $u^\infty$, determined from the initial data, we first need to estimate the entropy production term in case $\kappa_0>0$ from below by the difference $(u_i^{s/2}(x)-u_i^{s/2}(y))^2$ in the nonlocal term. This is possible if and only if $s\ge 2$. Second, by the nonlocal Poincar\'e inequality \cite{AMRT10}, the nonlocal difference is estimated from below by the difference $u_i-u_i^\infty$ in the $L^s(\Omega)$ norm. Third, this term can be bounded from below by the relative entropy if and only if $1<s\le 2$. Thus, we can expect a long-time asymptotics result only in the case $s=2$. We detail the arguments in Remark \ref{rem.time}. Observe that results in the literature are usually concerned with {\em linear} nonlocal diffusion operators, see, e.g., \cite[Theorem 3]{CCR06}. 

The paper is organized as follows. In Section \ref{sec.chain}, we prove the discrete chain rules needed for the analytical results. Theorems \ref{thm.ex} and \ref{thm.unique} are proved in Sections \ref{sec.ex} and \ref{sec.unique}, respectively. Some numerical experiments in one space dimension are presented in Section \ref{sec.numer}. Finally, we show in Section \ref{sec.fvm} that the discrete chain rules developed in this work allow for the construction of entropy-dissipating finite-volume schemes for cross-diffusion systems of the form $\pa_t u_i = \Delta P_i(u)$. 


\section{Discrete chain rules}\label{sec.chain}

We prove some discrete chain rules needed to derive (approximate) entropy inequalities. The following inequality was proved in \cite[Lemma A.3]{CJS16}. 

\begin{lemma}\label{lem.chain1}
Let $a$, $b\ge 0$ and $\beta$, $\gamma>0$. Then
\begin{align*}
  (a^\beta-b^\beta)(a^\gamma-b^\gamma) 
  \ge \frac{4\beta\gamma}{(\beta+\gamma)^2}
  (a^{(\beta+\gamma)/2} - b^{(\beta+\gamma)/2}).
\end{align*}
\end{lemma}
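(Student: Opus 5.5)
We read the inequality with the right-hand side squared, i.e.\ as
\begin{align*}
  (a^\beta-b^\beta)(a^\gamma-b^\gamma) \ge \frac{4\beta\gamma}{(\beta+\gamma)^2}\big(a^{(\beta+\gamma)/2}-b^{(\beta+\gamma)/2}\big)^2.
\end{align*}
The square appears to be missing from the displayed statement. Our reason is homogeneity: the left-hand side is homogeneous of degree $\beta+\gamma$ in $(a,b)$, and only the squared right-hand side matches this. The plan is a one-line Cauchy--Schwarz argument applied to integral representations of the differences of powers.

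\emph{Reductions.} By symmetry of both sides under $a\leftrightarrow b$, we may assume $a\ge b$. If $a=b$, both sides vanish. If $b=0<a$, the claim reads $a^{\beta+\gamma}\ge \frac{4\beta\gamma}{(\beta+\gamma)^2}a^{\beta+\gamma}$, which is the AM--GM inequality $4\beta\gamma\le(\beta+\gamma)^2$. So we may assume $a>b>0$ and work on the interval $[b,a]$, where no singularities of $t^{\beta-1}$ or $t^{\gamma-1}$ occur.

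\emph{Main step.} Set $m=(\beta+\gamma)/2$ and write
\begin{align*}
  a^\beta-b^\beta=\beta\int_b^a t^{\beta-1}dt,\qquad
  a^\gamma-b^\gamma=\gamma\int_b^a t^{\gamma-1}dt,\qquad
  a^m-b^m=m\int_b^a t^{m-1}dt.
\end{align*}
Since $t^{m-1}=t^{(\beta-1)/2}\,t^{(\gamma-1)/2}$, the Cauchy--Schwarz inequality on $[b,a]$ gives
\begin{align*}
  \bigg(\int_b^a t^{m-1}dt\bigg)^2\le \int_b^a t^{\beta-1}dt\int_b^a t^{\gamma-1}dt.
\end{align*}
Substituting the representations above, this becomes
\begin{align*}
  \frac{(a^m-b^m)^2}{m^2}\le \frac{(a^\beta-b^\beta)(a^\gamma-b^\gamma)}{\beta\gamma}.
\end{align*}
Multiplying by $\beta\gamma$ and using $m^2=(\beta+\gamma)^2/4$ yields exactly the claimed inequality.

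There is no genuine obstacle here. The only point needing care is the boundary case $b=0$ when $\beta<1$ or $\gamma<1$, where the integrands blow up at $t=0$. This is handled either by the direct AM--GM check in the reductions, or by noting that the integrals are still finite because $\beta,\gamma>0$.
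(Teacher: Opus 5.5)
Your proof is correct and complete, and reading the right-hand side as squared is the right fix. The displayed statement has a typo: taken literally it is false. For $\beta=\gamma=1$, $a=2\lambda$, $b=\lambda$ it reads $\lambda^2\ge\lambda$, which fails for $0<\lambda<1$. The paper itself only uses the squared form, e.g.\ in the proof of Lemma~\ref{lem.aei1} with $\beta=s-1$, $\gamma=1$ and with $\beta=s+1$, $\gamma=s-1$.

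The paper gives no proof of this lemma and only quotes it from \cite{CJS16}. So there is nothing internal to compare against, and your argument supplies a self-contained proof. It applies Cauchy--Schwarz to $t^{m-1}=t^{(\beta-1)/2}t^{(\gamma-1)/2}$ on $[b,a]$, after writing each difference of powers as an integral, and your treatment of the case $b=0$ is also fine.

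Compared with the technique the paper uses for the companion Lemma~\ref{lem.chain2} (set $z=a/b$ and check signs of derivatives of an auxiliary function), your route is shorter and avoids any case analysis in the exponents. Note also that the constant $4\beta\gamma/(\beta+\gamma)^2$ is sharp: as $a\to b>0$ both sides behave like $\beta\gamma\, b^{\beta+\gamma-2}(a-b)^2$. Unlike the inequalities of Lemma~\ref{lem.chain2}, this one therefore reproduces exactly the factor of the continuous chain rule.
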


The previous inequality is a discrete counterpart of the continuous chain rule $\na u^\beta\cdot\na u^\gamma = (4\beta\gamma/(\beta+\gamma)^2)|\na u^{(\beta+\gamma)/2}|^2$. The following lemma establishes new chain-rule inequalities, which may be viewed as discrete analogues of the chain rule identity $(1-1/s)\na u^s = u\na u^{s-1}$. 

\begin{lemma}\label{lem.chain2}
Let $a\ge b>0$. Then
\begin{align}
  \bigg(2-\frac{1}{s}\bigg)(a^s-b^s)
  \ge (a^{s-1}-b^{s-1})(a+b) 
  \ge \bigg(1-\frac{1}{s}\bigg)(a^s-b^s) &\quad\mbox{if }s > 1, 
  \label{2.ineq1} \\
  (a^s-b^s)^2 \ge (a^{s-1}-b^{s-1})^2(a+b)^2 
  &\quad\mbox{if }1<s\le 2, \label{2.ineq12} \\
  (a^s-b^s)^2 \ge \frac{s^2}{4(s-1)^2}(a^{s-1}-b^{s-1})^2(a+b)^2 
  &\quad\mbox{if }s\ge 2. \label{2.ineq2}
\end{align}
Inequalities \eqref{2.ineq12} and \eqref{2.ineq2} hold for any $a$, $b\ge 0$.   
\end{lemma}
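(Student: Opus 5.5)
By homogeneity, both sides of each inequality are homogeneous in $(a,b)$ of degree $s$ in \eqref{2.ineq1} and degree $2s$ in \eqref{2.ineq12}--\eqref{2.ineq2}. We therefore set $b=1$ and $a=x\ge 1$ and reduce everything to one-variable inequalities for
\begin{align*}
  F(x) = (x^{s-1}-1)(x+1) = x^s + x^{s-1} - x - 1, \qquad G(x) = x^s-1 .
\end{align*}
The cases with $b=0$ (or $a=0$, using the symmetry $a\leftrightarrow b$) in \eqref{2.ineq12}--\eqref{2.ineq2} are checked directly. For $b=0$ the claims read $a^{2s}\ge a^{2s}$ and $a^{2s}\ge \frac{s^2}{4(s-1)^2}a^{2s}$, and the latter holds since $s/(2(s-1))\le 1$ for $s\ge 2$. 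For $a,b>0$ both sides of \eqref{2.ineq12}--\eqref{2.ineq2} are squares of quantities of equal sign, so it suffices to prove $G\ge F$ when $1<s\le2$ and $G\ge \frac{s}{2(s-1)}F$ when $s\ge 2$, for $x\ge1$.

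For \eqref{2.ineq1}, the right inequality $F\ge(1-1/s)G$ is equivalent to
\begin{align*}
  \frac1s x^s + x^{s-1} - x - 2 + \frac{s-1}{s}\ge 0 .
\end{align*}
This expression vanishes at $x=1$. Its derivative is $x^{s-1}+(s-1)x^{s-2}-1$, which is $\ge 0$ for $x\ge1$ because $x^{s-1}\ge1$. The left inequality $(2-1/s)G\ge F$ is equivalent to
\begin{align*}
  \Big(1-\frac1s\Big)x^s - x^{s-1} + x - 1 + \frac1s \ge 0 .
\end{align*}
This also vanishes at $x=1$. Its derivative is $(s-1)x^{s-1}-(s-1)x^{s-2}+1 = (s-1)x^{s-2}(x-1)+1>0$. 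So both parts are monotonicity arguments starting from equality at $x=1$.

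For \eqref{2.ineq12}, the difference is $G-F = x - x^{s-1} \ge 0$ for $x\ge1$, since $s-1\le 1$. This case is immediate. For \eqref{2.ineq2}, set $c=s/(2(s-1))\in(1/2,1]$ and consider
\begin{align*}
  \phi(x)=G-cF=(1-c)x^s - c x^{s-1} + cx + c - 1 .
\end{align*}
We have $\phi(1)=0$, and $\phi'(x)=s(1-c)x^{s-1}-c(s-1)x^{s-2}+c$. Since $c(s-1)=s/2$ and $s(1-c)=s(s-2)/(2(s-1))$, this becomes
\begin{align*}
  \phi'(x)=\frac{s}{2}\Big(\frac{s-2}{s-1}x^{s-1}-x^{s-2}+\frac1{s-1}\Big).
\end{align*}
At $x=1$ this equals $0$. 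Differentiating once more gives
\begin{align*}
  \frac{s(s-2)}{2}\big(x^{s-2}-x^{s-3}\big)\ge 0 \quad\text{for } x\ge1,\ s\ge2 ,
\end{align*}
so $\phi'\ge0$ and hence $\phi\ge0$.

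The main obstacle I expect is locating exactly where the constant $s^2/(4(s-1)^2)$ is forced. As the computation shows, it is precisely the value that makes $\phi'(1)=0$, so it is the optimal constant near $x=1$. A second-derivative (convexity) argument is then needed rather than a first-order one, and checking the sign of $\phi''$ is the delicate step. I would also double-check that squaring is legitimate, i.e.\ that $F,G\ge0$ for $x\ge1$, which is clear.
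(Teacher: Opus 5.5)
Your proof is correct and follows the paper's argument essentially step by step: reduce to $x=a/b\ge 1$, show each difference vanishes at $x=1$ with nonnegative derivative, and for \eqref{2.ineq2} use $\phi'(1)=0$ together with $\phi''=\frac{s(s-2)}{2}x^{s-3}(x-1)\ge 0$. Two remarks. Your bound $x^{s-1}\ge 1$ for the right half of \eqref{2.ineq1} and your explicit check of $b=0$ are slightly cleaner than the paper's treatment. There is also one slip: in the displayed expression for that right inequality the constant should be $-1+\frac{s-1}{s}=-\frac{1}{s}$, not $-2+\frac{s-1}{s}$. As written the expression equals $-1$ at $x=1$, although your derivative computation and conclusion are right for the correct expression.
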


\begin{proof}
We set $z=a/b\ge 1$. To verify \eqref{2.ineq1}, we need to show that for all $z\ge 1$,
\begin{align*}
  \bigg(2-\frac{1}{s}\bigg)(z^s-1)
  \ge (z^{s-1}-1)(z+1) \ge \bigg(1-\frac{1}{s}\bigg)(z^s-1).
\end{align*}
The first inequality is equivalent to
\begin{align*}
  F_1(z) := \bigg(1-\frac{1}{s}\bigg)z^s - z^{s-1} + z 
  + \frac{1}{s} - 1 \ge 0.
\end{align*}
Since $F_1(1)=0$ and $F_1'(z) = (s-1)z^{s-2}(z-1) + 1\ge 1$, this yields $F_1(z)\ge 0$ for $z\ge 1$. The second inequality is equivalent to 
\begin{align*}
  F_2(z) := \frac{1}{s}(z^s-1) + z^{s-1} - z \ge 0.
\end{align*}
We claim that $F_2'(z)\ge 0$ for $z\ge 1$. For this, we write $F_2'(z) = G(z)-1$, where $G(z):=z^{s-2}(z+s-1)$. Then $G'(z) = (s-1)z^{s-3}(z + s - 2)$. Since $z\ge 1$ and $s\ge 1$, we have $z+s-2\ge 0$ and hence $G'(z)\ge 0$. Taking into account that $G(1)=s\ge 1$, we infer that $G(z)\ge 1$. Consequently, $F_2(z)=G(z)-1\ge 0$ for $z\ge 1$. 

Inequality \eqref{2.ineq12} for $1<s\le 2$ can be formulated as
\begin{align*}
  z^s-1 \ge (z^{s-1}-1)(z+1)\quad\mbox{for }z = a/b\ge 1.
\end{align*}
This follows directly from $z^{s-1}\le z$ (here, we need that $s\le 2$) and thus
\begin{align*}
  (z^{s-1}-1)(z+1) = z^s + z^{s-1} - z - 1 \le z^s-1.
\end{align*}
If $a<b$, we set $z=b/a>1$ and argue in a similar way as before.

Finally, we write inequality \eqref{2.ineq2} for $s\ge 2$ and $z=a/b\ge 1$ as
\begin{align*}
  z^s-1 \ge \frac{s}{2(s-1)}(z^{s-1}-1)(z+1).
\end{align*}
Define the function
\begin{align*}
  F_3(z) := z^s -1 - \frac{s}{2(s-1)}(z^{s-1}-1)(z+1).
\end{align*}
Since $F_3(1)=0$, it is sufficient to show that $F_3'(z)\ge 0$. For this, we compute
\begin{align*}
  F_3'(z) &= \frac{s}{2(s-1)}
  \big((s-2)z^{s-1} - (s-1)z^{s-2} + 1\big), \\
  F_3''(z) &= \frac{s(s-2)}{2}z^{s-3}(z-1) \ge 0.
\end{align*}
It follows from $F_3'(1)=0$ that $F_3'(z)\ge 0$ and consequently $F_3(z)\ge 0$. The case $z=b/a\ge 1$ is treated in a similar way.
\end{proof}

We show in Appendix \ref{sec.fvm} how the discrete chain rules can be applied to two-point flux approximation finite-volume methods. This is somehow natural, since these methods may be viewed as conservative finite-difference schemes written in flux form, and the underlying nonlocal operator naturally involves the finite difference $P_i(u(y))-P_i(u(x))$. 


\section{Existence of a global strong solution}\label{sec.ex}

In this section, we prove Theorem \ref{thm.ex}. We first regularize the equations, prove the existence of a solution to the regularized problem, derive uniform estimates, and then pass to the de-regularization limit. 

\subsection{Approximate problem}

We start by regularizing the kernel $J$ and the initial data $u_i^0$. Let $\eta\in(0,1)$ be the regularization parameter, and let $J_\eta$ be a family of approximating kernels satisfying Assumption (A2), $J_\eta\in W^{1,1}(\Omega)$, and $J_\eta\to J$ strongly in $L^q(\Omega)$ for all $1\le q<\infty$ as $\eta\to 0$. Let $u_{i,\eta}^0\in W^{1,\infty}(\Omega)$ be such that $u_{i,\eta}^0\to u_i^0$ strongly in $L^p(\Omega)$ for all $1\le p<\infty$. As in \cite[Sec.~3]{GaVe19}, we choose the families such that $(J_\eta)$, $(u_\eta^0)$ are bounded in $L^\infty(\Omega)$ and
\begin{align*}
  \|\na J_\eta\|_{L^1(\Omega)}\to \mathrm{TV}(J), \quad
  \|\na u_{i,\eta}^0\|_{L^1(\Omega)}\to\mathrm{TV}(u_{i}^0),
\end{align*}
where $\mathrm{TV}(g)$ denotes the total variation of the function $g$, defined by
\begin{align*}
  \mathrm{TV}(g) = \sup\bigg\{\int_\Omega g\diver\phi dx:
  \phi\in C_c^1(\Omega;\R^d),\ \|\phi\|_{L^\infty(\Omega)}\le 1\bigg\}.
\end{align*}
Let $m_\eta(x) = \int_\Omega J_\eta(x-y)dy$. In view of the uniform $L^1(\Omega)$ bound of $J_\eta$ and Assumption (A2), we have $J_*\le m_\eta(x)\le J^*$ for $x\in\Omega$, $\eta\in(0,1)$, up to a redefinition of $J_*$ and $J^*$. Given $k\in\N$ and $u^{k-1}_i\in L^\infty(\Omega)$ with $u_i^{k-1}\ge 0$ in $\Omega$, we introduce the implicit Euler scheme with varying time step $\tau_k>0$:
\begin{align}\label{3.approx}
  u_i(x) = u_i^{k-1} + \tau_k\int_\Omega J_\eta(x-y)
  \big(P_i(u^+(y))-P_i(u^+(x))\big)dy
 + \tau_k u_i^+(x)f_i(u^+(x))
\end{align}
for $x\in\Omega$, $i=1,\ldots,n$, where $z^+=\max\{0,z\}$ denotes the positive part of $z\in\R$. 

We prove the existence of a solution to \eqref{3.approx} by using the Banach fixed-point theorem. The proof is similar to \cite[Sec.~3.1]{GaVe19}, but since our nonlinearity is different, we need to detail the contraction argument. Let $N\in\N$, $M_0=\max_{i=1,\ldots,n}\|u_{i,\eta}^0\|_{L^\infty(\Omega)}$, and $M_k=M_0\sum_{\ell=0}^k 2^{-\ell}$ for $k=1,\ldots,N$. For the following arguments, we do not need $M_0$ to be independent of $\eta$. We define the function space
\begin{align*}
  V_k = \big\{v\in W^{1,\infty}(\Omega;\R^n):\|v_i\|_{L^\infty(\Omega)}\le M_k
  \mbox{ for }i=1,\ldots,n\big\}.
\end{align*}

\begin{lemma}\label{lem.approx}
Let $u^{k-1}\in V_{k-1}$ be given. Then, for sufficiently small $\tau_k>0$, there exists a unique solution $u\in V_k$ to \eqref{3.approx}. 
\end{lemma}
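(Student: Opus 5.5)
We will construct the solution as the fixed point of the map $S:V_k\to V_k$ defined by
\begin{align*}
  S(v)_i(x) = u_i^{k-1}(x) + \tau_k\int_\Omega J_\eta(x-y)\big(P_i(v^+(y))-P_i(v^+(x))\big)dy
  + \tau_k v_i^+(x)f_i(v^+(x)),
\end{align*}
and apply Banach's fixed-point theorem in $V_k$ endowed with the metric of $L^\infty(\Omega;\R^n)$. Since $V_k$ is not closed in $L^\infty$ (the $W^{1,\infty}$ condition is not preserved under uniform limits with unbounded gradients), we will instead work in the closed subset
\begin{align*}
  \widetilde V_k = \{v\in V_k:\ \|\na v_i\|_{L^\infty(\Omega)}\le R_k\}
\end{align*}
for a suitable $R_k$. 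This set is closed in $L^\infty$ because uniformly convergent sequences with uniformly bounded Lipschitz constants have Lipschitz limits with the same bound. After obtaining a fixed point there, we recover uniqueness in all of $V_k$ from the contraction estimate, which holds on the larger set as well.

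\textbf{Step 1 (self-map, $L^\infty$ bound).} For $v\in V_k$, the nonlinearities $P_i$ and $u_if_i$ are locally Lipschitz and bounded on $[0,M_k]^n$, say $|P_i(v^+)|\le C_P(M_k)$ and $|v_i^+f_i(v^+)|\le C_f(M_k)$. Then
\begin{align*}
  \|S(v)_i\|_{L^\infty}\le M_{k-1}+\tau_k\big(2J^*C_P(M_k)+C_f(M_k)\big).
\end{align*}
This is at most $M_k=M_{k-1}+M_02^{-k}$ provided $\tau_k\le M_02^{-k}/(2J^*C_P+C_f)$.

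\textbf{Step 1 (self-map, gradient bound).} We differentiate in $x$. The first term gives $\na u_i^{k-1}$. The convolution term $\int_\Omega J_\eta(x-y)P_i(v^+(y))dy$ has gradient bounded by $\|\na J_\eta\|_{L^1}C_P$, using $J_\eta\in W^{1,1}$; here we extend $J_\eta$ to the relevant difference set. The term $P_i(v^+(x))m_\eta(x)$ has gradient bounded by $L_P(M_k)R_k J^* + C_P\|\na m_\eta\|_\infty$, where $\na m_\eta$ is bounded by $\|\na J_\eta\|_{L^1}$ (with a boundary correction we accept as an $\eta$-dependent constant). The reaction term contributes $L_f(M_k)R_k$. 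Altogether,
\begin{align*}
  \|\na S(v)_i\|_\infty \le \|\na u_i^{k-1}\|_\infty + \tau_k\big(C_1(\eta,M_k)+C_2(M_k)R_k\big).
\end{align*}
We choose $R_k=2\|\na u^{k-1}\|_\infty+1$ and $\tau_k$ small enough that the right-hand side is at most $R_k$. Since $\eta$ is fixed, $\eta$-dependent constants are harmless.

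\textbf{Step 2 (contraction).} For $v,w\in V_k$, we use that $z\mapsto z^+$ is $1$-Lipschitz and that $P_i$ and $u_if_i$ are Lipschitz on $[0,M_k]^n$ with constant $L(M_k)$, which is finite because $s>1$ makes $u_j^s$ locally Lipschitz. This gives
\begin{align*}
  \|S(v)-S(w)\|_\infty\le \tau_k(2J^*+1)L(M_k)\|v-w\|_\infty,
\end{align*}
which is a contraction for small $\tau_k$.

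Banach's theorem then yields a fixed point. Uniqueness in $V_k$ follows because any two solutions in $V_k$ satisfy the same contraction estimate, forcing them to coincide.

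The main obstacle is the gradient bound. It requires handling $\na m_\eta$ and differentiating the convolution on a bounded domain, and the $W^{1,1}$ regularity of $J_\eta$ supplies exactly what is needed there. If $\na m_\eta$ turns out to be delicate near $\pa\Omega$, a fallback is to treat $m_\eta P_i(v^+(x))$ via a difference-quotient (Lipschitz) estimate, bounding $|m_\eta(x)-m_\eta(x')|\le \|J_\eta(\cdot+x-x')-J_\eta\|_{L^1}\le |x-x'|\,\|\na J_\eta\|_{L^1}$.
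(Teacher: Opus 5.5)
Your proof follows the paper's argument: the same fixed-point map, the same $L^\infty$ invariance via $M_k=M_{k-1}+M_0 2^{-k}$, and the same $L^\infty$ contraction for small $\tau_k$. Your one addition is restricting to the gradient-bounded set $\widetilde V_k$ and checking the $W^{1,\infty}$ bound explicitly, which fills a gap the paper leaves open, since $V_k$ itself is not complete in the $L^\infty$ metric.
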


\begin{proof}
Let $k\in\{1,\ldots,N\}$ and $i\in\{1,\ldots,n\}$ be fixed. We define the fixed-point operator $Q:V_k\to V_k$ by
\begin{align*}
  (Q(v))(x) = u_i^{k-1}(x) + \tau_k\int_\Omega J_\eta(x-y)
  \big(P_i(v^+(y))-P_i(v^+(x))\big)dy + \tau_k v_i^+(x) f_i(v^+(x)).
\end{align*}
The operator is well-posed provided that $Q(V_k)\subset V_k$ holds. To see this, we observe that $Q(v)\in W^{1,\infty}(\Omega)$, since the functions are sufficiently smooth, and we estimate 
\begin{align*}
  |Q(v(x))| &\le u_i^{k-1}(x) + \tau_k\int_\Omega J_\eta(x-y)
  \big(|P_i(v^+(y))|+|P_i(v^+(x))|\big)dy \\
  &\phantom{xx}+ \tau_k v_i^+(x)
  \bigg(b_{i0} + \sum_{j=1}^n b_{ij}v_j^+(x)\bigg) \\
  &\le M_{k-1} + CJ^*\tau_k\|v_i^+\|_{L^\infty(\Omega)}
  \bigg(1 + \sum_{j=1}^n\|v_j^+\|_{L^\infty(\Omega)}^s\bigg) \\
  &\le M_{k-1} + C\tau_k M_k(1+M_k^s). 
\end{align*}
Choosing $C_0\ge 2C(1+2^{s}M_0^s)$ and $0<\tau_k<1/(C_0 2^{k})$, we infer from $M_k\le 2M_0$ that 
\begin{align*}
  |Q(v(x))| \le M_{k-1} + 2C\tau_k M_0(1+2^s M_0^s) 
  \le M_{k-1} + M_0 2^{-k} = M_k.
\end{align*}
This shows that $Q(V_k)\subset V_k$. As $P_i$ and $u\mapsto u_if_i(u)$ are locally Lipschitz continuous and functions in $V_k$ are bounded, we obtain for $v$, $w\in V_k$ that
\begin{align*}
  |Q(v(x))-Q(w(x))| \le C\tau_k\|v-w\|_{L^\infty(\Omega)},
\end{align*}
where $C>0$ depends on the Lipschitz constants of $P_i$ and $u\mapsto u_if_i(u)$ on $V_k$. Thus, choosing $\tau_k<1/C$, the mapping $Q$ becomes a contraction, and Banach's fixed-point theorem yields a unique fixed point $u\in V_k$ to \eqref{3.approx}.
\end{proof}

We claim that the solution to \eqref{3.approx} is nonnegative. For this, let $z^-=\max\{0,-z\}$ for $z\in\R$. Then $z=z^+ - z^-$. We integrate \eqref{3.approx} over $\{u_i<0\}$, use $P_i(u^+) = u_i^+p_i(u^+) = 0$ in $\{u_i<0\}$, and observe that $u_i^{k-1}\ge 0$ as well as $J_\eta\ge 0$:
\begin{align*}
  \int_{\{u_i<0\}}& u_i^-dx = \int_{\{u_i<0\}} u_i^+ dx
  - \int_{\{u_i<0\}} u_i^{k-1}dx
  - \tau_k\int_{\{u_i<0\}}u_i^+f_i(u^+)dx \\
  &\phantom{xx}- \tau_k\int_\Omega
  \int_\Omega J_\eta(x-y)\big(u_i^+(y)p_i(u^+(y))
  - u_i^+(x)p_i(u^+(x))\big)\mathrm{1}_{\{u_i(x)<0\}}dydx \\
  &= -\int_{\{u_i<0\}} u_i^{k-1}dx
  - \tau_k\int_{\{u_i<0\}}\int_\Omega J_\eta(x-y)
  u_i^+(y)p_i(u^+(y))dydx \le 0.
\end{align*}
This implies that $0\le\int_{\{u_i<0\}} u_i^-(x)dx\le 0$ and hence $u_i\ge 0$ in $\Omega$. 

Notice that it may be necessary to choose progressively smaller values of $\tau_k$ as the time increases. Therefore, the final time $T$ cannot be arbitrarily large. In the following, we derive a priori estimates that allow us to continue the solution to any arbitrarily large final time.

\subsection{Approximate entropy inequalities}

Uniform estimates are derived from the following approximate entropy inequalities. We suppose that Assumptions (A1)--(A4) hold. 

\begin{lemma}[Approximate entropy inequality I]\label{lem.aei1}
Let Assumption (A5a) hold and let $u$ be the strong solution to \eqref{1.eq}--\eqref{1.ic} constructed in Lemma \ref{lem.approx}. Then
\begin{align}\label{3.aei1}
  \int_\Omega &(h(u)-h(u^{k-1}))dx + \frac{2\tau_k}{s}\sum_{i=1}^n a_{i0}
  \int_\Omega\int_\Omega J_\eta(x-y)
  \big(u_i(x)^{s/2}-u_i(y)^{s/2}\big)^2 dydx \\
  &\phantom{xx}+ \frac{s\tau_k}{8(s-1)C(s)}\sum_{i=1}^n
  \int_\Omega\int_\Omega J_\eta(x-y)
  \bigg(a_{ii}-C(s)\sum_{j=1,\neq i}^n a_{ij}\bigg)
  \big(u_i(x)^s - u_i(y)^s\big)^2 dydx \nonumber \\
  &\le C\tau_k\bigg(1 + \int_\Omega h(u)dx\bigg), \nonumber 
\end{align}
recalling definition \eqref{1.Cs} of the constant $C(s)$.
\end{lemma}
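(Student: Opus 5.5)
The plan is to test the implicit Euler scheme \eqref{3.approx} (with $u\ge 0$, so $u^+=u$) with the entropy variable $h_i'(u_i)=(su_i^{s-1}-1)/(s-1)$ and sum over $i$. Since each $h_i$ is convex, $h_i(u_i)-h_i(u_i^{k-1})\le h_i'(u_i)(u_i-u_i^{k-1})$, so the left-hand side is bounded by $\tau_k$ times the tested diffusion and reaction terms. The constant $-1/(s-1)$ in $h_i'$ contributes nothing to the diffusion term, because $J_\eta$ is even and the double integral of $P_i(u(y))-P_i(u(x))$ vanishes. Symmetrizing in $x\leftrightarrow y$, the diffusion contribution becomes
\begin{align*}
  -\frac{s\tau_k}{2(s-1)}\sum_{i=1}^n\int_\Omega\int_\Omega J_\eta(x-y)
  \big(P_i(u(x))-P_i(u(y))\big)\big(u_i(x)^{s-1}-u_i(y)^{s-1}\big)dydx .
\end{align*}
The remaining task is to bound the integrand from below.

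Write $\Delta g=g(x)-g(y)$ and split $P_i=a_{i0}u_i+a_{ii}u_i^{s+1}+\sum_{j\ne i}a_{ij}u_iu_j^s$, treating each piece separately.
\begin{itemize}
\item For the $a_{i0}$ part, Lemma \ref{lem.chain1} with $\beta=1$, $\gamma=s-1$ gives $\Delta u_i\,\Delta u_i^{s-1}\ge\frac{4(s-1)}{s^2}(\Delta u_i^{s/2})^2$. Multiplying by $s/(2(s-1))$ yields exactly the coefficient $2/s$ in \eqref{3.aei1}.
\item For the self-diffusion part, Lemma \ref{lem.chain1} with $\beta=s+1$, $\gamma=s-1$ gives $\Delta u_i^{s+1}\Delta u_i^{s-1}\ge\frac{s^2-1}{s^2}(\Delta u_i^s)^2$.
\item For the cross terms, use the discrete product rule
\begin{align*}
  \Delta(u_iu_j^s)=\tfrac12\Delta u_j^s\,\big(u_i(x)+u_i(y)\big)
  +\tfrac12\Delta u_i\,\big(u_j(x)^s+u_j(y)^s\big).
\end{align*}
The second summand, multiplied by $\Delta u_i^{s-1}$, is nonnegative and is simply dropped. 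In the first summand, the factor $\Delta u_i^{s-1}(u_i(x)+u_i(y))$ is controlled through Lemma \ref{lem.chain2}: by \eqref{2.ineq12} and \eqref{2.ineq2}, $|\Delta u_i^{s-1}|(u_i(x)+u_i(y))\le c_s|\Delta u_i^s|$ with $c_s=1$ for $1<s\le2$ and $c_s=2(s-1)/s$ for $s\ge2$. Young's inequality then bounds the product by $\frac{c_s}{4}\big((\Delta u_i^s)^2+(\Delta u_j^s)^2\big)$.
\end{itemize}
Summing over $i,j$ and using the symmetry $a_{ij}=a_{ji}$ from (A5a), the cross contributions are absorbed into $\sum_i\big(\sum_{j\ne i}a_{ij}\big)(\Delta u_i^s)^2$ with an explicit $s$-dependent constant. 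That constant is then compared with $\frac{s}{2(s-1)}\cdot\frac{s^2-1}{s^2}a_{ii}$.

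The main obstacle is this constant bookkeeping: one must arrange the Young weights (possibly choosing them $i$-dependent, or unequally between the $u_i$ and $u_j$ terms) so that the surviving coefficient takes precisely the form $\frac{s}{8(s-1)C(s)}\big(a_{ii}-C(s)\sum_{j\ne i}a_{ij}\big)$, with $C(s)$ as in \eqref{1.Cs}. The two regimes of $C(s)$ should come from the two regimes of $c_s$. My first attempt will be to factor out the self-diffusion coefficient, divide by $C(s)$, and check that the constants line up. If they do not, I will instead sacrifice part of the self-diffusion term, which presumably explains the factor $1/8$, so that a positive multiple of $\kappa(s)$ remains.

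Finally, the reaction term $\tau_k\sum_ih_i'(u_i)u_if_i(u)$ is estimated as follows.
\begin{itemize}
\item The contribution $\frac{s}{s-1}b_{i0}u_i^s$ is at most $C(1+h(u))$, since $h_i(u_i)\ge\frac{u_i^s}{s-1}-C$.
\item The terms $-\frac{s}{s-1}b_{ij}u_i^su_j$ are nonpositive and are dropped.
\item The terms $+\frac{1}{s-1}b_{ij}u_iu_j$ are bounded by $C(1+u_i^s+u_j^s)$ via Young's inequality, using $s>1$.
\end{itemize}
Integrating over $\Omega$ gives the right-hand side $C\tau_k(1+\int_\Omega h(u)dx)$ and completes \eqref{3.aei1}.
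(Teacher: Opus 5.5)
Your skeleton is the paper's: testing with $h_i'$, convexity, symmetrization, Lemma~\ref{lem.chain1} for the $a_{i0}$ and $a_{ii}$ parts, and the product-rule splitting with the monotone half dropped. The real difference is the cross term. Write $X_i=u_i(x)$, $Y_i=u_i(y)$, $A_i=(X_i^{s-1}-Y_i^{s-1})^2(X_i+Y_i)^2$ and $B_i=(X_i^s-Y_i^s)^2$.

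The paper applies Young first, getting $-\frac14\sum_i(\sum_{j\ne i}a_{ij})(A_i+B_i)$. It then splits the self-diffusion bound $\frac{s^2-1}{s^2}a_{ii}B_i$ with a parameter $\delta$ and converts one part into $A_i$ via \eqref{2.ineq12}/\eqref{2.ineq2}. This split produces the factor $\frac{1}{4C(s)}$, hence the $1/8$.

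You apply the chain-rule inequality first ($\sqrt{A_i}\le c_s\sqrt{B_i}$) and then Young, so no self-diffusion is lost. Finishing your bookkeeping gives the dissipation term (times $\tau_k$)
\[
  \frac{s+1}{2s}\sum_{i=1}^n\Big(a_{ii}-C_*(s)\sum_{j\ne i}a_{ij}\Big)\int_\Omega\int_\Omega J_\eta(x-y)B_i\,dydx,
  \qquad C_*(s)=\frac{c_s s^2}{2(s^2-1)}.
\]
The paper's route corresponds to $\frac{(1+c_s^2)s^2}{4(s^2-1)}$. Since $2c_s\le 1+c_s^2$, your constant is never worse. For $1<s<2$ ($c_s=1$) you get exactly the $C(s)$ of \eqref{1.Cs}, with prefactor twice the paper's $\frac{s}{8(s-1)C(s)}=\frac{s+1}{4s}$. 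So \eqref{3.aei1} follows immediately, and your fallback of sacrificing self-diffusion is unnecessary.

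For $s\ge 2$ you get $C_*(s)=s/(s+1)$. This is strictly larger than the printed branch $\frac{4(s-1)^2+1}{4(s+1)(s-1)}$ of \eqref{1.Cs}, so you cannot close the argument with that constant. Reweighting Young does not help: for $n=2$, $a_{11}=a_{22}=1$, your reduced form $\frac{s^2-1}{s^2}(b_1^2+b_2^2)-c_sa_{12}b_1b_2$ is indefinite once $a_{12}>1/C_*(s)$. Sacrificing self-diffusion only makes $C$ larger.

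However, the paper's own proof does not reach the printed constant either. With $\delta=s^2/(4(s-1)^2)+1$ it derives the factor $\frac{(s+1)(s-1)}{4(s-1)^2+s^2}$, i.e.\ $C(s)=\frac{4(s-1)^2+s^2}{4(s+1)(s-1)}$, which also matches the other branch at $s=2$. The ``$+1$'' in \eqref{1.Cs} therefore looks like a misprint for ``$+s^2$''. Against this corrected constant, $C(s)-\frac{s}{s+1}=\frac{(s-2)^2}{4(s^2-1)}\ge 0$ and $\frac{s+1}{2s}\ge\frac{s}{8(s-1)C(s)}$, so your estimate implies \eqref{3.aei1}.

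One step does fail. In the reaction term, $b_{ij}u_iu_j\le C(1+u_i^s+u_j^s)$ is false for $1<s<2$: the product is quadratic (take $u_i=u_j=t\to\infty$). In that range $\int_\Omega u_iu_j\,dx$ is not controlled by $\int_\Omega h(u)\,dx$. The fix is to keep $-\frac{s}{s-1}b_{ij}u_i^su_j$ and combine it with the positive term. The combination $\frac{b_{ij}}{s-1}u_ju_i(1-su_i^{s-1})$ is nonpositive where $u_i\ge 1$ and at most $\frac{b_{ij}}{s-1}u_j$ where $u_i<1$. Then use $\int_\Omega u_j\,dx\le C(1+\int_\Omega h(u)\,dx)$. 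This is the paper's split into $\{X_i\le 1\}$ and $\{X_i>1\}$.

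A smaller slip: $h_i(u_i)\ge\frac{u_i^s}{s-1}-C$ is false, since $-u_i/(s-1)$ is unbounded below. Use $h_i(u_i)\ge\frac{u_i^s}{2(s-1)}-C$ instead.
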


\begin{proof}
To simplify the notation, we set $X_i=u_i(x)$ and $Y_i=u_i(y)$ for $i=1,\ldots,n$. We multiply \eqref{3.approx} by $h_i'(X_i) = (sX_i^{s-1}-1)/(s-1)$, sum over $i=1,\ldots,n$, and integrate over $\Omega$. Then $I_0+I_1+I_2=0$, where 
\begin{align}\label{3.I12} 
  I_0 &= \frac{1}{\tau_k}\sum_{i=1}^n\int_\Omega
  (X_i-u_i^{k-1}(x))h'_i(X_i)dx, \\
  I_1 &= -\sum_{i=1}^n\int_\Omega\int_\Omega J_\eta(x-y)
  \big(P_i(u(y))-P_i(u(x))\big)h_i'(X_i)dydx, \nonumber \\
  I_2 &= -\sum_{i=1}^n\int_\Omega
  \bigg(b_{i0}-\sum_{j=1}^n b_{ij}X_j\bigg)X_i h'_i(X_i)dx. \nonumber  
\end{align}
Since $h_i$ is convex, the first term $I_0$ can be estimated according to
\begin{align*}
  I_0 \ge \frac{1}{\tau_k}\int_\Omega\big(h(u)-h(u^{k-1})\big)dx.
\end{align*}
Taking into account that $J_\eta$ is even, we obtain
\begin{align*}
  \sum_{i=1}^n\int_\Omega\int_\Omega J_\eta(x-y)
  \big(P_i(u(y))-P_i(u(x))\big)dydx = 0,
\end{align*}
which shows that
\begin{align*}
  I_1 &= -\frac12\sum_{i=1}^n\int_\Omega\int_\Omega J_\eta(x-y)
  \big(P_i(u(y))-P_i(u(x))\big)\big(h_i'(X_i)-h_i'(Y_i)\big)dydx \\
  &= \frac{s}{2(s-1)}\sum_{i=1}^n\int_\Omega\int_\Omega J_\eta(x-y)
  \big(P_i(u(x))-P_i(u(y))\big)(X_i^{s-1}-Y_i^{s-1})dydx.
\end{align*}
We insert definition \eqref{1.p} of the pressure:
\begin{align*}
  I_1 &= \frac{s}{2(s-1)}\sum_{i=1}^n a_{i0}
  \int_\Omega\int_\Omega J_\eta(x-y)
  (X_i-Y_i)(X_i^{s-1}-Y_i^{s-1})dydx \\
  &\phantom{xx}+ \frac{s}{2(s-1)}\sum_{i,j=1}^n a_{ij}
  \int_\Omega\int_\Omega J_\eta(x-y)
  (X_iX_j^s - Y_iY_j^s)(X_i^{s-1}-Y_i^{s-1})dydx.
\end{align*}
Setting
\begin{align}
  I_{11} &= \sum_{i=1}^n a_{i0}(X_i-Y_i)(X_i^{s-1}-Y_i^{s-1}), 
  \nonumber \\
  I_{12} &= \sum_{i,j=1,\,i\neq j}^n a_{ij}
  (X_iX_j^s - Y_iY_j^s)(X_i^{s-1}-Y_i^{s-1}), \label{3.I123} \\
  I_{13} &= \sum_{i=1}^n a_{ii}
  (X_i^{s+1} - Y_i^{s+1})(X_i^{s-1}-Y_i^{s-1}), \nonumber 
\end{align}
we need to estimate 
\begin{align}\label{3.I1}
  I_1 = \frac{s}{2(s-1)}\int_\Omega\int_\Omega J_\eta(x-y)
  (I_{11}+I_{12}+I_{13})dydx.
\end{align}

We apply Lemma \ref{lem.chain1} with $\beta=s-1>0$ and $\gamma=1$:
\begin{align*}
  I_{11} \ge \frac{4(s-1)}{s^2}\sum_{i=1}^n a_{i0}
  (X_i^{s/2}-Y_i^{s/2})^2.
\end{align*}
Next, we estimate $I_{13}$ by applying Lemma \ref{lem.chain1} with $\beta=s+1$ and $\gamma=s-1$:
\begin{align*}
  I_{13} &\ge \frac{(s+1)(s-1)}{s^2}\sum_{i=1}^n a_{ii}
  (X_i^s-Y_i^s)^2 \\
  &= \frac{(s+1)(s-1)}{s^2}\sum_{i=1}^n a_{ii}
  \bigg(\frac{1}{\delta}(X_i^s-Y_i^s)^2
  + \frac{\delta-1}{\delta}(X_i^s-Y_i^s)^2\bigg),
\end{align*} 
where $\delta>1$. Now, we distinguish between the cases $1<s<2$ and $s\ge 2$. Let $1<s<2$ and $\delta=2$. Inequality \eqref{2.ineq12} for $a=X_i$ and $b=Y_i$ yields
\begin{align*}
  I_{13} \ge \frac{(s+1)(s-1)}{2s^2}\sum_{i=1}^n a_{ii}\big(
  (X_i^{s-1}-Y_i^{s-1})^2(X_i+Y_i)^2 + (X_i^s-Y_i^s)^2\big).
\end{align*}
Let $s\ge 2$ and $\delta=s^2/(4(s-1)^2)+1$. Then, by inequality \eqref{2.ineq2},
\begin{align*}
  I_{13} &\ge \frac{(s+1)(s-1)}{s^2}\sum_{i=1}^n a_{ii}\bigg(
  \frac{s^2}{4\delta(s-1)^2}
  (X_i^{s-1}-Y_i^{s-1})^2(X_i+Y_i)^2
  + \frac{\delta-1}{\delta}(X_i^s-Y_i^s)^2\bigg) \\
  &\ge \min\bigg\{\frac{s+1}{4\delta(s-1)},
  \frac{\delta-1}{\delta}\frac{(s+1)(s-1)}{s^2}\bigg\} \\
  &\phantom{xx}\times
  \sum_{i=1}^n a_{ii}\big((X_i^{s-1}-Y_i^{s-1})^2(X_i+Y_i)^2
  + (X_i^s-Y_i^s)^2\big) \\
  &= \frac{(s+1)(s-1)}{4(s-1)^2+s^2}
  \sum_{i=1}^n a_{ii}\big((X_i^{s-1}-Y_i^{s-1})^2(X_i+Y_i)^2
  + (X_i^s-Y_i^s)^2\big),
\end{align*}
where the last step is a consequence of the choice of $\delta$, leading to $(s+1)/(4\delta(s-1)) = (s+1)(s-1)(\delta-1)/(\delta s^2)$. Summarizing both cases $1<s<2$ and $s\ge 2$, we arrive at
\begin{align*}
  I_{13}\ge \frac{1}{4C(s)}\sum_{i=1}^n a_{ii}\big((X_i^{s-1}-Y_i^{s-1})^2(X_i+Y_i)^2
  + (X_i^s-Y_i^s)^2\big),
\end{align*}
where $C(s)$ is defined in \eqref{1.Cs}. This expression is used to compensate terms coming from $I_{12}$. It follows from
\begin{align*}
  X_iX_j^s - Y_iY_j^s = \frac12(X_i-Y_i)(X_j^s+Y_j^s)
  + \frac12(X_i+Y_i)(X_j^s-Y_j^s).
\end{align*}
that 
\begin{align*}
  I_{12} &= \frac12\sum_{i,j=1,\,i\neq j}^n a_{ij}
  (X_i-Y_i)(X_i^{s-1}-Y_i^{s-1})(X_j^s+Y_j^s) \\
  &\phantom{xx}+ \frac12\sum_{i,j=1,\,i\neq j}^n a_{ij}
  (X_i+Y_i)(X_i^{s-1}-Y_i^{s-1})(X_j^s-Y_j^s).
\end{align*}
The first term on the right-hand side is nonnegative, since $z\mapsto z^{s-1}$ is monotone, and will be neglected. Thus, applying Young's inequality to the remaining term and taking into account the symmetry of $a_{ij}$,
\begin{align*}
  I_{12} &\ge -\frac14\sum_{i,j=1,\,i\neq j}^n a_{ij}
  \big((X_i^{s-1}-Y_i^{s-1})^2(X_i+Y_i)^2 + (X_j^s-Y_j^s)^2\big) \\
  &= -\frac14\sum_{i=1}^n\bigg(\sum_{j=1,\,j\neq i}^n a_{ij}\bigg)
  \big((X_i^{s-1}-Y_i^{s-1})^2(X_i+Y_i)^2 + (X_i^s-Y_i^s)^2\big).
\end{align*}

Putting these estimates together, we infer from \eqref{3.I1} that 
\begin{align*}
  I_1 &\le -\frac{2}{s}\int_\Omega\int_\Omega J_\eta(x-y)
  \sum_{i=1}^n a_{i0}(X_i^{s/2}-Y_i^{s/2})^2 dydx \\
  &\phantom{xx}- \frac{s}{8(s-1)C(s)}\sum_{i=1}^n
  \bigg(a_{ii} - C(s)\sum_{j=1,\,j\neq i}^n a_{ij}\bigg) \\
  &\phantom{xx}\times\int_\Omega\int_\Omega J_\eta(x-y)
  \big((X_i^{s-1}-Y_i^{s-1})^2(X_i+Y_i)^2 + (X_i^s-Y_i^s)^2\big)dydx.
\end{align*}
Assumption \eqref{1.a} implies that the last term is nonpositive.

It remains to estimate the reaction term $I_2$. We split $I_2 = I_{21}+I_{22}$, where
\begin{align*}
  I_{21} = \sum_{i=1}^n \frac{b_{i0}}{s-1}\int_\Omega(sX_i^s-X_i)dx,
  \quad I_{22} = -\sum_{i,j=1}^n\frac{b_{ij}}{s-1}\int_\Omega
  X_j(sX_i^s-X_i)dx.
\end{align*}
Writing $(sX_i^s-X_i)/(s-1) = h_i(X_i) + X_i^s$ and using $h_i(X_i)\ge 0$ for $X_i\ge 1$, we find that
\begin{align*}
  I_{21} &= \sum_{i=1}^n b_{i0}\int_\Omega(h_i(X_i)+X_i^s)dx
  \le C + C\int_\Omega h(u)dx, \\
  I_{22} &= -\sum_{i,j=1}^n b_{ij}\bigg(\int_{\{0\le X_i\le 1\}}
  X_j(h_i(X_i) + X_i)dx + \int_{\{X_i>1\}}X_j(h_i(X_i) + X_i)dx\bigg) \\
  &\le -\sum_{i,j=1}^n b_{ij}\int_{\{0\le X_i\le 1\}}
  X_j\bigg(\frac{X_i^s-1}{s-1} + X_i\bigg)dx \\
  &\le C\sum_{j=1}^n\int_\Omega X_j dx \le C + C\int_\Omega h(u)dx.
\end{align*} 
Collecting the estimates for $I_1$ and $I_2$ finishes the proof.
\end{proof}
 
\begin{lemma}[Approximate entropy inequality II]\label{lem.aei2}
Let Assumption (A5b) hold and let $u$ be the strong solution to \eqref{1.eq}--\eqref{1.ic}, constructed in Lemma \ref{lem.approx}. Then there exists a constant $c>0$, depending on the smallest eigenvalue $\alpha$ of $(a_{ij})$, such that
\begin{align}\label{3.aei2}
  \int_\Omega &(h(u)-h(u^{k-1})dx + \frac{2\tau_k}{s}\sum_{i=1}^n a_{i0}
  \int_\Omega\int_\Omega J_\eta(x-y)
  \big(u_i(x)^{s/2}-u_i(y)^{s/2}\big)^2 dydx \\
  &+ c\tau_k\sum_{i=1}^n\int_\Omega\int_\Omega J_\eta(x-y)
  \big(u_i(x)^s - u_i(y)^s\big)^2 dydx
  \le C\tau_k\bigg(1 + \int_\Omega h(u)dx\bigg), \nonumber 
\end{align}
\end{lemma}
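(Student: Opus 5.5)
The proof follows that of Lemma \ref{lem.aei1} almost verbatim. We multiply \eqref{3.approx} by $h_i'(X_i)$, sum over $i$, integrate over $\Omega$, and obtain $I_0+I_1+I_2=0$ with the terms from \eqref{3.I12}. The treatment of $I_0$ (convexity of $h_i$), of $I_{11}$ (Lemma \ref{lem.chain1} with $\beta=s-1$, $\gamma=1$), and of the reaction term $I_2$ carries over unchanged, since none of these steps used (A5a). The only new ingredient is a lower bound for the cross-diffusion part. Here we do not split the diagonal and off-diagonal parts as in \eqref{3.I123}; we treat $I_{12}+I_{13}=\sum_{i,j}a_{ij}(X_iX_j^s-Y_iY_j^s)(X_i^{s-1}-Y_i^{s-1})$ as a single quadratic-form-like expression.

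Using the splitting $X_iX_j^s-Y_iY_j^s=\frac12(X_i-Y_i)(X_j^s+Y_j^s)+\frac12(X_i+Y_i)(X_j^s-Y_j^s)$ for all $i,j$ (including $i=j$), the first contribution is again nonnegative because $a_{ij}\ge0$ and $z\mapsto z^{s-1}$ is monotone, so we drop it. Set $A_i=(X_i+Y_i)(X_i^{s-1}-Y_i^{s-1})$ and $B_j=X_j^s-Y_j^s$. The remaining term is then $\frac12\sum_{i,j}a_{ij}A_iB_j=\frac12 A^\top(a_{ij})B$. The key structural fact comes from Lemma \ref{lem.chain2}, inequality \eqref{2.ineq1}: $A_i=\lambda_iB_i$ with a scalar $\lambda_i\in[1-1/s,\,2-1/s]$, and $\lambda_i$ can be defined via the ratio $z=X_i/Y_i$ (the case $X_i=Y_i$ is trivial). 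Thus $A=\Lambda B$ with $\Lambda=\mathrm{diag}(\lambda_i)$, and we need a lower bound $B^\top\Lambda(a_{ij})B\ge c|B|^2$.

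This is the main obstacle. Positive definiteness of $(a_{ij})$ does not imply that $\Lambda(a_{ij})$ has positive symmetric part for arbitrary positive diagonal $\Lambda$. It does hold when $\Lambda$ is close to a multiple of the identity. Indeed, write $\Lambda=\bar\lambda I+(\Lambda-\bar\lambda I)$ with $\bar\lambda=3/2-1/s$ the midpoint, so that $|\lambda_i-\bar\lambda|\le 1/2$. Then
\begin{align*}
  B^\top\Lambda(a_{ij})B \ge \bar\lambda\alpha|B|^2 - \tfrac12\|(a_{ij})\|\,|B|^2 ,
\end{align*}
which is positive only under a condition number restriction. I would therefore first try to avoid the crude midpoint argument. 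One option is to use the full strength of \eqref{2.ineq12}/\eqref{2.ineq2}, which bound $|A_i|\le c_s|B_i|$, together with the dropped nonnegative term $\frac12\sum_{i,j}a_{ij}(X_i-Y_i)(X_i^{s-1}-Y_i^{s-1})(X_j^s+Y_j^s)$. The other is to write $A_i=(1-1/s)B_i+R_i$ with $R_i\ge0$ of the same sign as $B_i$ (by \eqref{2.ineq1}), so that
\begin{align*}
  \tfrac12 A^\top(a_{ij})B=\tfrac12(1-\tfrac1s)B^\top(a_{ij})B+\tfrac12 R^\top(a_{ij})B .
\end{align*}
The first part is at least $\frac{s-1}{2s}\alpha|B|^2$. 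The cross part $R^\top(a_{ij})B$ has nonnegative diagonal terms $a_{ii}R_iB_i\ge0$, and its off-diagonal terms, of indefinite sign, must be controlled; I expect this control to be the delicate point. If this fails, I would combine the diagonal part as in the estimate of $I_{13}$ with Young's inequality on the off-diagonal part and absorb it using $\alpha$, accepting a constant $c$ depending on $\alpha$ and $s$ (and possibly on the norm of the matrix).

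Once a bound $\frac12 A^\top(a_{ij})B\ge c_0\sum_i(X_i^s-Y_i^s)^2$ is available, we insert it into \eqref{3.I1} together with the $I_{11}$ estimate. This gives $I_1\ge \frac2s\sum_ia_{i0}\iint J_\eta(X_i^{s/2}-Y_i^{s/2})^2+c\sum_i\iint J_\eta(X_i^s-Y_i^s)^2$ with $c=sc_0/(2(s-1))$. Multiplying by $\tau_k$ and adding the bounds for $I_0$ and $I_2$ yields \eqref{3.aei2}.
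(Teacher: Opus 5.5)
Your proposal leaves the only substantive step open, and the inequality you reduce it to is false under (A5b). After discarding $\frac12\sum_{i,j}a_{ij}(X_i-Y_i)(X_i^{s-1}-Y_i^{s-1})(X_j^s+Y_j^s)$, you need $\sum_{i,j}a_{ij}\lambda_iB_iB_j\ge c|B|^2$. Here $\lambda_i$ depends only on $X_i/Y_i$ and is unchanged by swapping $X_i\leftrightarrow Y_i$, while $B_i$ can be rescaled freely. So this amounts to positivity of the symmetric part of $\Lambda(a_{ij})$ for every admissible $\Lambda$, and that fails. For $s=3$ one has $\lambda_i=(X_i+Y_i)^2/(X_i^2+X_iY_i+Y_i^2)$, so $\lambda_1=1$ for $Y_1=0$ and $\lambda_2=9/7$ for $Y_2=2X_2$. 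Take $a_{11}=a_{22}=1$, $a_{12}=a_{21}=a$ with $63/64<a^2<1$ (a symmetric positive definite matrix), and choose the scales so that $B_2=-\frac{8a}{9}B_1$. Then $\sum_{i,j}a_{ij}A_iB_j=(1-\frac{64a^2}{63})B_1^2<0$. So once that term is dropped, the remaining inequality cannot be proved at all. Your first option brings the term back but gives no mechanism for using it. Your fallback (Young on the off-diagonal terms plus absorption) needs diagonal dominance, i.e.\ an (A5a)-type condition, which $\alpha>0$ does not supply.

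You have correctly located the real difficulty, and the paper's own proof does not get past it either. It takes the factor $\frac{s-1}{s}$ from \eqref{2.ineq1} for same-sign pairs and $\frac{2s-1}{s}$ for opposite-sign pairs, then asserts a single $C$ with $I_{12}\ge C\sum_{i\ne j}a_{ij}B_iB_j$. But same-sign pairs ($B_iB_j\ge0$) force $C\le\frac{s-1}{s}$, while opposite-sign pairs ($B_iB_j\le0$) force $C\ge\frac{2s-1}{s}$, so no such $C$ exists.

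The missing idea is not to split $X_iX_j^s-Y_iY_j^s$ at all, but to expand the full product. With $\xi_i=X_i^s$, $\eta_i=Y_i^s$ and $M=(a_{ij})$,
\begin{align*}
  I_{12}+I_{13} = \xi^\top M\xi + \eta^\top M\eta
  - \sum_{i=1}^n X_iY_i^{s-1}(M\xi)_i - \sum_{i=1}^n Y_iX_i^{s-1}(M\eta)_i .
\end{align*}
Because $a_{ij}\ge0$, the factors $(M\xi)_i$ and $(M\eta)_i$ are nonnegative. Young's inequality $X_iY_i^{s-1}\le\frac1s\xi_i+\frac{s-1}{s}\eta_i$ (and the same with $X_i$, $Y_i$ exchanged) then gives
\begin{align*}
  I_{12}+I_{13} \ge \frac{s-1}{s}(\xi-\eta)^\top M(\xi-\eta)
  \ge \frac{s-1}{s}\,\alpha\sum_{i=1}^n\big(X_i^s-Y_i^s\big)^2 .
\end{align*}
Plug this into \eqref{3.I1}, together with your treatment of $I_0$, $I_{11}$ and $I_2$, to obtain \eqref{3.aei2} with $c=\alpha/2$. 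This is the exact discrete counterpart of the continuous computation. It uses no chain-rule inequality and no sign distinction, and needs only $a_{ij}\ge0$ and positive definiteness.
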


\begin{proof}
We proceed as the proof of Lemma \ref{lem.aei2} but we estimate $I_{12}$ and $I_{13}$, defined in \eqref{3.I123}, differently. Recall that
\begin{align*}
  I_{12} = \frac12\sum_{i,j=1,\,i\neq j}^n a_{ij}
  (X_i+Y_i)(X_i^{s-1}-Y_i^{s-1})(X_j^s-Y_j^s).
\end{align*}
We use inequality \eqref{2.ineq1} to find that if $X_i-Y_i$ and $X_j-Y_j$ have the same sign then 
\begin{align*}
  (X_i+Y_i)(X_i^{s-1}-Y_i^{s-1})(X_j^s-Y_j^s)
  \ge \frac{s-1}{s}(X_i^s-Y_i^s)(X_j^s-Y_j^s),
\end{align*}
while if $X_i-Y_i$ and $X_j-Y_j$ have opposite signs then
\begin{align*}
  (X_i+Y_i)(X_i^{s-1}-Y_i^{s-1})(X_j^s-Y_j^s)
  \ge \frac{2s-1}{s}(X_i^s-Y_i^s)(X_j^s-Y_j^s).
\end{align*}
This shows that, for some constant $C>0$,
\begin{align*}
  I_{12} \ge C\sum_{i,j=1,\,i\neq j}^n a_{ij}(X_i^s-Y_i^s)(X_j^s-Y_j^s).
\end{align*}
 The term $I_{13}$ is estimated with the use of Lemma \ref{lem.chain1}:
\begin{align*}
  I_{13}\ge \frac{4(s+1)(s-1)}{s^2}\sum_{i=1}^n a_{ii}(X_i^s-Y_i^s)^2.
\end{align*}
Collecting the estimates for $I_{12}$ and $I_{13}$ shows
\begin{align*}
  I_{12} +I_{13} \ge C\sum_{i,j=1}^n a_{ij}(X_i^s-Y_i^s)(X_j^s-Y_j^s)
  \ge C\alpha \sum_{i=1}^n(X_i^s-Y_i^s)^2,
\end{align*}
recalling that $\alpha>0$ is the smallest eigenvalue of $(a_{ij})$. This finishes the proof.
\end{proof}

\subsection{Further uniform estimates}

The approximate entropy inequalities imply some uniform bounds for the solution $u^k:=u$ to \eqref{3.approx}. Let $t_0=0$ and set $t_k=\sum_{\ell=1}^k\tau_\ell$ for $k=1,\ldots,N$ as well as $\tau=\min_{k=1,\ldots,N}\tau_k>0$. Recall that $t_N=T$. We introduce the piecewise constant and piecewise linear in time functions, respectively, by 
\begin{align*}
  u_i^{(\tau)}(t,x) = u_i^k, \quad
  \widetilde{u}_i^{(\tau)}(t,x) = u_i^k(x) + \frac{t_k-t}{\tau_k}
  (u_i^{k-1}(x)-u_i^k(x))
\end{align*}
for $x\in\Omega$ and $t\in(t_{k-1},t_k]$. Then \eqref{3.approx} can be written as
\begin{align}\label{3.tau}
  \pa_t\widetilde{u}_i^{(\tau)}(t,x) 
  = \int_\Omega J_\eta(x-y)\big(P_i(u^{(\tau)}(t,y))
  - P_i(u^{(\tau)}(t,x))\big)dy
  + u_i^{(\tau)}(t,x)f_i(u^{(\tau)}(t,x))
\end{align}
for $x\in\Omega$, $0<t<T$. 

\begin{lemma}[Uniform $L^1(\Omega)$ estimates]\label{lem.est0}
For sufficiently small $\tau>0$, there exists a constant $C>0$ independent of $\tau$ and $\eta$ such that for all $i=1,\ldots,n$,
\begin{align*}
  \|u_i^{(\tau)}\|_{L^\infty(0,T;L^1(\Omega))}\le C.
\end{align*}
\end{lemma}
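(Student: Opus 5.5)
Integrating the scheme \eqref{3.approx} over $\Omega$ makes the diffusion drop out, and the sign structure of the Lotka--Volterra terms then yields a discrete Gronwall inequality for the masses. Since $J_\eta$ is even, the change of variables $x\leftrightarrow y$ gives
\begin{align*}
  \int_\Omega\int_\Omega J_\eta(x-y)\big(P_i(u^k(y))-P_i(u^k(x))\big)dydx = 0.
\end{align*}
By the nonnegativity shown after Lemma \ref{lem.approx}, $u^{k,+}=u^k$. Integrating \eqref{3.approx} over $\Omega$ therefore yields
\begin{align*}
  \int_\Omega u_i^k dx = \int_\Omega u_i^{k-1}dx
  + \tau_k\int_\Omega u_i^k\bigg(b_{i0}-\sum_{j=1}^n b_{ij}u_j^k\bigg)dx
  \le \int_\Omega u_i^{k-1}dx + \tau_k b_{i0}\int_\Omega u_i^k dx,
\end{align*}
where we drop the competition terms because $b_{ij}\ge0$ and $u^k\ge0$.

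Set $m_i^k=\|u_i^k\|_{L^1(\Omega)}$ and $b^*=\max_i b_{i0}$. The previous inequality reads $(1-\tau_k b^*)m_i^k\le m_i^{k-1}$. For $\tau\le\tau_k$ and $\tau_k b^*\le 1/2$ (this is the smallness condition in the statement), we use $(1-x)^{-1}\le e^{2x}$ for $0\le x\le 1/2$. Iterating then gives
\begin{align*}
  m_i^k\le e^{2b^*t_k}m_i^0\le e^{2b^*T}\|u_{i,\eta}^0\|_{L^1(\Omega)}.
\end{align*}
Because $u_{i,\eta}^0\to u_i^0$ in $L^1(\Omega)$, the right-hand side is bounded uniformly in $\eta$.

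The constant is therefore independent of $\tau$ and $\eta$. Since $u_i^{(\tau)}$ is piecewise constant in time with values $u_i^k$, the bound transfers directly to $L^\infty(0,T;L^1(\Omega))$.

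There is no real obstacle in this argument. The points to check are that nonnegativity lets us replace $u^+$ by $u$, and that the time-step restriction from Lemma \ref{lem.approx} is compatible with $\tau_k b^*\le1/2$. Both are routine.
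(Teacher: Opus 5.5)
Your proposal is correct and follows essentially the paper's proof: integrate the scheme, let the nonlocal term vanish by the evenness of $J_\eta$, drop the competition terms, and close with a discrete Gronwall argument. The only difference is cosmetic. You iterate the one-step bound $(1-\tau_k b^*)m_i^k\le m_i^{k-1}$ species by species into the explicit estimate $e^{2b^*T}\|u_{i,\eta}^0\|_{L^1(\Omega)}$, whereas the paper sums over $i$ and $k$, absorbs the last term, and cites a discrete Gronwall lemma.
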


\begin{proof}
We integrate \eqref{3.approx} over $\Omega$ and use the fact that $J_\eta$ is an even function:
\begin{align*}
  \int_\Omega u_i^k(x)dx &= \int_\Omega u_i^{k-1}dx
  + \tau_k\int_\Omega\int_\Omega J_\eta(x-y)
  \big(P_i(u^{(\tau)}(t,y)) - P_i(u^{(\tau)}(t,x))\big)dydx \\
  &\phantom{xx}+ \tau_k\int_\Omega u_i^k(x)f_i(u^k(x))dx \\
  &= \int_\Omega u_i^{k-1}dx
  + \tau_k\int_\Omega u_i^k(x)
  \bigg(b_{i0} - \sum_{j=1}^n b_{ij}u_j^k(x)\bigg)dx \\
  &\le \int_\Omega u_i^{k-1}dx + \tau_k b_{i0}\int_\Omega u_{i}^kdx.
\end{align*}
A summation over $i=1,\ldots,n$ and $k=1,\ldots,\ell$ for $\ell\le N$ gives
\begin{align*}
  \sum_{i=1}^n\int_\Omega u_i^\ell(x)dx
  \le \sum_{i=1}^n\int_\Omega u_i^0(x)dx
  + \sum_{k=1}^\ell\tau_k\sum_{i=1}^n b_{i0}\int_\Omega u_i^k dx
  \le C(T,u^0).
\end{align*}
Choosing $\tau>0$ sufficiently small, the last term can be absorbed by the left-hand side, and the discrete Gronwall inequality (see, e.g. \cite{Cla87}) finishes the proof.
\end{proof}

Lemma \ref{lem.est0} is valid without the use of the approximate entropy inequalities and consequently without Assumptions (A5a) and (A5b). We use the entropy inequalities to achieve further uniform estimates.

\begin{lemma}[Uniform $L^s(\Omega)$ estimates]\label{lem.est1}
For sufficiently small $\tau>0$, there exists a constant $C>0$ independent of $\tau>0$ and $\eta$ such that for all $i=1,\ldots,n$,
\begin{align*}
  \|u_i^{(\tau)}\|_{L^\infty(0,T;L^s(\Omega))}
  + \|u_i^{(\tau)}\|_{L^{2s}(\Omega_T)}
  + a_{i0}^{1/s}\|u_i^{(\tau)}\|_{L^{s}(\Omega_T)} \le C.
\end{align*}
\end{lemma}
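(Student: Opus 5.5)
Summing the approximate entropy inequality of Lemma \ref{lem.aei1} (or Lemma \ref{lem.aei2} under (A5b)) over $k=1,\ldots,\ell$ gives
\begin{align*}
  \int_\Omega h(u^\ell)dx + \sum_{k=1}^\ell\tau_k D_k
  \le \int_\Omega h(u^0_\eta)dx + C\sum_{k=1}^\ell\tau_k\bigg(1+\int_\Omega h(u^k)dx\bigg),
\end{align*}
where $D_k\ge0$ collects the two nonlocal dissipation terms. Since $u^0_\eta$ is bounded in $L^\infty(\Omega)$ uniformly in $\eta$, the first term on the right is bounded. For $\tau$ small with $C\tau_\ell<1/2$, we absorb the $k=\ell$ term into the left-hand side. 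The discrete Gronwall inequality then gives $\sup_k\int_\Omega h(u^k)dx\le C$ and $\sum_k\tau_kD_k\le C$, uniformly in $\tau,\eta$. The entropy density satisfies $h_i(u_i)\ge (u_i^s-u_i)/(s-1)$, and Lemma \ref{lem.est0} controls $\int_\Omega u_i dx$, so this yields the $L^\infty(0,T;L^s(\Omega))$ bound.

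For the $L^{2s}(\Omega_T)$ bound, I would use the dissipation term $\int\int J_\eta(x-y)(X_i^s-Y_i^s)^2dydx$ together with a nonlocal Poincaré-type argument based on the lower bound $m_\eta\ge J_*$ of \eqref{1.meta}. Expanding the square gives
\begin{align*}
  \int_\Omega\int_\Omega J_\eta(x-y)(X_i^s-Y_i^s)^2dydx
  = 2\int_\Omega m_\eta(x)X_i^{2s}dx - 2\int_\Omega\int_\Omega J_\eta(x-y)X_i^sY_i^sdydx.
\end{align*}
The first term is at least $2J_*\|u_i\|_{L^{2s}}^{2s}$. The cross term is bounded by $\|J_\eta\|_{L^\infty}\|u_i\|_{L^s}^{2s}$, which is uniformly bounded by the previous step, since $(J_\eta)$ is bounded in $L^\infty$. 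Hence
\begin{align*}
  \|u_i^k\|_{L^{2s}}^{2s}\le C\,D_k + C,
\end{align*}
with the constant in front of $D_k$ coming from $\kappa(s)>0$ (or $c\alpha$ under (A5b)). Multiplying by $\tau_k$ and summing gives $\|u_i^{(\tau)}\|_{L^{2s}(\Omega_T)}\le C$.

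The $a_{i0}$ term is handled the same way, using the dissipation $(X_i^{s/2}-Y_i^{s/2})^2$. Expanding as above, $a_{i0}\|u_i\|_{L^s}^s$ is bounded by the dissipation plus $a_{i0}\|J_\eta\|_\infty\|u_i^{s/2}\|_{L^1}^2$. The latter is bounded because $\|u_i^{s/2}\|_{L^1}\le C\|u_i\|_{L^s}^{s/2}$. Strictly, this part already follows from the $L^\infty(0,T;L^s)$ bound, since $T$ is finite.

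The main obstacle is only bookkeeping: ensuring uniformity in $\eta$. This needs uniform $L^\infty$ bounds on $J_\eta$ and $u^0_\eta$ and the uniform lower bound $m_\eta\ge J_*$, all of which are arranged in the construction above.
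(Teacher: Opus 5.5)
Your proof is correct and follows the paper's argument in its essentials. You expand the square in the nonlocal dissipation, bound the diagonal part below via $m_\eta\ge J_*$ and the cross term above by $\|J_\eta\|_{L^\infty(\Omega)}\|u_i^k\|_{L^s(\Omega)}^{2s}$, then sum the approximate entropy inequality and apply the discrete Gronwall inequality. The only difference is the order of steps: the paper absorbs the cross term before Gronwall by interpolating $L^s$ between $L^{2s}$ and $L^1$ (Young's inequality plus Lemma \ref{lem.est0}), so the dissipation is already coercive, $\ge J_*\|u_i^k\|_{L^{2s}(\Omega)}^{2s}-C$; you instead run Gronwall first, discarding the dissipation, to get the $L^\infty(0,T;L^s(\Omega))$ bound, and then use it to control the cross term, which avoids the interpolation step.
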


\begin{proof}
We expand the square in the last term on the left-hand side of \eqref{3.aei1} or \eqref{3.aei2} and use again the evenness of $J_\eta$:
\begin{align*}
  \int_\Omega&\int_\Omega J_\eta(x-y)\big(u_i^k(x)^s-u_i^k(y)^s\big)^2
  dxdy \\
  &= 2\int_\Omega\bigg(\int_\Omega J_\eta(x-y)dy\bigg)
  u_i^k(x)^{2s} dx
  - 2\int_\Omega\int_\Omega J_\eta(x-y)u_i^k(x)^s u_i^k(y)^s dydx \\
  &\ge 2J_*\|(u_i^k)^s\|_{L^2(\Omega)}^2 
  - 2\|J_\eta\|_{L^\infty(\Omega)}\|(u_i^k)^s\|_{L^1(\Omega)}^2
  \le 2J_*\|u_i^k\|_{L^{2s}(\Omega)}^{2s}
  - C\|u_i^k\|_{L^s(\Omega)}^{2s},
\end{align*}
where $C\ge 2\|J_\eta\|_{L^\infty(\Omega)}$ can be chosen to be independent of $\eta$. We claim that the right-hand side can be bounded independently of $\eta$. Indeed, by the interpolation inequality, with $1/s = \theta/(2s) + (1-\theta)$,
\begin{align*}
  \|u_i^k\|_{L^s(\Omega)}^{2s}
  \le \|u_i^k\|_{L^{2s}(\Omega)}^{2s\theta}
  \|u_i^k\|_{L^1(\Omega)}^{2s(1-\theta)}
  \le J_*\|u_i^k\|_{L^{2s}(\Omega)}^{2s} 
  + C(J_*)\|u_i^k\|_{L^1(\Omega)}^{2s}.
\end{align*}
We deduce from Lemma \ref{lem.est0}, for some constant $C>0$ independent of $\tau$ and $\eta$, that
\begin{align*}
  \int_\Omega\int_\Omega J_\eta(x-y)\big(u_i^k(x)^s-u_i^k(y)^s\big)^2
  dydx \ge J_*\|u_i^k\|_{L^{2s}(\Omega)}^{2s} - C.
\end{align*}
Similar arguments lead to 
\begin{align*}
  \int_\Omega\int_\Omega J_\eta(x-y)
  \big(u_i^k(x)^{s/2}-u_i^k(y)^{s/2}\big)^2 dxdy
  \ge J_*\|u_i^k\|_{L^{s}(\Omega)}^{s} - C.
\end{align*}
Thus, the approximate entropy inequalities yield
\begin{align*}
  \int_\Omega&(h(u^k)-h(u^{k-1}))dx
  + c\tau_k\sum_{i=1}^n\big(\|u_i^k\|_{L^{2s}(\Omega)}^{2s}
  + \|u_i^k\|_{L^{s}(\Omega)}^{s}\big)
  \le C + C\tau_k\int_\Omega h(u^k)dx,
\end{align*}
where $c>0$, $C>0$ do not depend on $\tau$ or $\eta$. We sum over $k=1,\ldots,N$ and apply the discrete Gronwall inequality (for sufficiently small $\tau>0$) to find that 
\begin{align*}
  \int_\Omega h(u^{(\tau)}(t)) dx
  + c\int_0^T\big(\|u_i^{(\tau)}\|_{L^{2s}(\Omega)}^{2s}
  + \|u_i^{(\tau)}\|_{L^{s}(\Omega)}^{s}\big)dt
  \le C(T) + C\tau\int_\Omega h(u^0)dx,
\end{align*}
finishing the proof.
\end{proof}

The previous lemma allows us to derive further uniform bounds.

\begin{lemma}[Uniform $L^\infty(\Omega)$ estimates]\label{lem.est2}
For sufficiently small $\tau>0$, there exists a constant $C>0$ independent of $\tau>0$ and $\eta$ such that for all $i=1,\ldots,n$,
\begin{align*}
  \|u_i^{(\tau)}\|_{L^\infty(\Omega_T)} 
  + \|\widetilde{u}_i^{(\tau)}\|_{L^\infty(\Omega_T)}
  + \|\na u_i^{(\tau)}\|_{L^\infty(0,T;L^1(\Omega))}
  + \|\pa_t\widetilde{u_i}^{(\tau)}\|_{L^\infty(\Omega_T)} \le C.
\end{align*}
Furthermore, there exists a constant $C(\eta)>0$ independent of $\tau$ such that
\begin{align*}
  \|\na u_i^{(\tau)}\|_{L^\infty(\Omega_T)}
  + \|\na\widetilde{u}_i^{(\tau)}\|_{L^\infty(\Omega_T)}\le C(\eta).
\end{align*}
\end{lemma}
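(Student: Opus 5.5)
The plan is to read off all bounds pointwise from the scheme \eqref{3.approx}, now that Lemma \ref{lem.est1} gives integrability independent of $\tau$ and $\eta$. We proceed in the order $L^\infty$ bound, time derivative, gradients.

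\emph{Step 1: $L^\infty$ bound.} Since $u^k\ge 0$, we rewrite \eqref{3.approx} as
\begin{align*}
  u_i^k(x) + \tau_k m_\eta(x)P_i(u^k(x)) + \tau_k u_i^k(x)\sum_{j=1}^n b_{ij}u_j^k(x)
  = u_i^{k-1}(x) + \tau_k\int_\Omega J_\eta(x-y)P_i(u^k(y))dy + \tau_k b_{i0}u_i^k(x).
\end{align*}
The last two terms on the left-hand side are nonnegative and are dropped. The nonlocal gain term is estimated by $\|J_\eta\|_{L^\infty}\|P_i(u^k)\|_{L^1(\Omega)}$, which is uniform in $\eta$. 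By Young's inequality, $u_iu_j^s\le u_i^{s+1}+u_j^{s+1}$, so
\begin{align*}
  \|P_i(u^k)\|_{L^1(\Omega)} \le C\Big(1+\sum_{j=1}^n\|u_j^k\|_{L^{s+1}(\Omega)}^{s+1}\Big) =: CA_k .
\end{align*}
Because $s+1\le 2s$, Lemma \ref{lem.est1} (together with Lemma \ref{lem.est0}) gives $\sum_k\tau_kA_k\le C$. The resulting pointwise recursion is
\begin{align*}
  (1-\tau_kb_{i0})u_i^k(x)\le u_i^{k-1}(x)+C\tau_kA_k .
\end{align*}
For small $\tau$, the discrete Gronwall lemma yields $u_i^k(x)\le C(\|u^0_\eta\|_{L^\infty}+\sum_k\tau_kA_k)\le C$, uniformly in $\tau$ and $\eta$. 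The same bound holds for $\widetilde u^{(\tau)}_i$, since it is a convex combination of $u^{k-1}_i$ and $u^k_i$. A useful by-product: the smallness condition on $\tau_k$ in Lemma \ref{lem.approx} now depends only on this uniform bound, so the scheme can be continued up to any $T$.

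\emph{Step 2: time derivative.} The bound on $\pa_t\widetilde u^{(\tau)}_i$ is immediate from \eqref{3.tau}. Its right-hand side is bounded by $CJ^*\sup|P_i(u)|+C\sup|u_if_i(u)|$, which is finite by Step 1.

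\emph{Step 3: gradients (main step).} We differentiate \eqref{3.approx} in $x$, which is allowed since $u^k\in W^{1,\infty}$ and $J_\eta\in W^{1,1}$. This gives
\begin{align*}
  \na u_i^k = \na u_i^{k-1} + \tau_k\int_\Omega \na J_\eta(x-y)P_i(u^k(y))dy - \tau_k\na m_\eta(x)P_i(u^k(x))
  - \tau_k m_\eta(x)\sum_j\pa_jP_i(u^k)\na u_j^k + \tau_k\sum_j\pa_j(u_if_i)(u^k)\na u_j^k .
\end{align*}
The cross-diffusion Jacobian couples the species, but its entries are bounded by Step 1. I therefore do not try to exploit any sign structure and simply absorb these terms. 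Summing over $i$ gives, pointwise,
\begin{align*}
  (1-C\tau_k)\sum_i|\na u_i^k(x)| \le \sum_i|\na u_i^{k-1}(x)| + C\tau_k\big(|\na J_\eta|*\mathbf 1_\Omega(x)+|\na m_\eta(x)|\big).
\end{align*}
Both forcing terms are controlled by $\|\na J_\eta\|_{L^1}$: pointwise by $\int_\Omega|\na J_\eta(x-y)|dy\le\|\na J_\eta\|_{L^1}$, and in $L^1(\Omega)$ by $|\Omega|\,\|\na J_\eta\|_{L^1}$. The quantity $\|\na J_\eta\|_{L^1}$ is uniformly bounded because it converges to $\mathrm{TV}(J)$.

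Integrating the recursion over $\Omega$ and applying discrete Gronwall gives the $L^\infty(0,T;L^1)$ gradient bound. Here we use that $\|\na u^0_{i,\eta}\|_{L^1}\to\mathrm{TV}(u_i^0)$, so the initial gradient is uniformly bounded in $L^1$. Taking the supremum in $x$ instead gives the $L^\infty$ gradient bound. Its constant depends on $\eta$ only through $\|\na u^0_{i,\eta}\|_{L^\infty}$, which is why it is $C(\eta)$. The bounds for $\na\widetilde u^{(\tau)}_i$ again follow by convex combination.

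The main obstacle I expect is making the Gronwall constants in Steps 1 and 3 independent of $\eta$. In particular, Step 1 must rely on $\|J_\eta\|_{L^\infty}$ and on the space-time $L^{s+1}$ integrability from Lemma \ref{lem.est1}, not on any $\eta$-dependent smoothness of $J_\eta$.
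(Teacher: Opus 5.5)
Your proposal is correct and follows the paper's proof essentially step by step. You use a pointwise recursion from \eqref{3.approx} based on $\|J_\eta\|_{L^\infty(\Omega)}$ and the space-time integrability from Lemma \ref{lem.est1}, and read the time-derivative bound off \eqref{3.tau}. You then differentiate the scheme and absorb the Jacobian terms for small $\tau_k$, taking the supremum in $x$ for the $C(\eta)$ bound and integrating over $\Omega$ for the $\eta$-uniform $L^\infty(0,T;L^1(\Omega))$ bound. The differences are cosmetic: you use Young's inequality with the $L^{s+1}$ norm where the paper uses Cauchy--Schwarz with $L^2$ and $L^{2s}$ norms. Your explicit remark that the forcing is only summable, $\sum_k\tau_kA_k\le C$, and not bounded at each step, makes precise what the paper leaves implicit in ``by iteration''.
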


\begin{proof}
We use the nonnegativity of $u_i^k$ and $u_i^{k-1}$ and the Cauchy--Schwarz inequality:
\begin{align*}
  u_i^k(x)&= u_i^{k-1}(x) + \tau_k\int_\Omega J_\eta(x-y)
  \big(P_i(u^k(y))-P_i(u^k(x))\big)dy 
  + \tau_ku_i^k(x)f_i(u^k(x)) \\
  &\le u_i^{k-1}(x) + \tau_k\|J_\eta\|_{L^\infty(\Omega)}
  \int_\Omega|P_i(u^k(y))|dy 
  + b_{i0}\tau_k u_i^k(x) \\
  &\le u_i^{k-1}(x) + C\tau_k
  \bigg(\|u_i^k\|_{L^1(\Omega)} + \|u_i^k\|_{L^2(\Omega)}
  \sum_{j=1}^n\|(u_j^k)^s\|_{L^2(\Omega)}\bigg) 
  + b_{i0}\tau_k u_i^k(x).
\end{align*}
It follows that
\begin{align*}
  (1-b_{i0}\tau_k)\|u_i^k\|_{L^\infty(\Omega)}
  \le \|u_i^{k-1}\|_{L^\infty(\Omega)}
  + C\tau_k\bigg(\|u_i^k\|_{L^1(\Omega)} 
  + \|u_i^k\|_{L^2(\Omega)}
  \sum_{j=1}^n\|u_j^k\|_{L^{2s}(\Omega)}^s\bigg).
\end{align*}
By the induction hypothesis, $u_i^{k-1}\in L^\infty(\Omega)$. Thus, choosing $\tau_k<1/b_{i0}$ for all $i=1,\ldots,n$ and taking into account Lemma \ref{lem.est1}, we conclude the $L^\infty(\Omega)$ bound for $u_i^k$. This estimate is independent of $\tau$ and $\eta$. By iteration, this gives an $L^\infty(\Omega_T)$ bound for $u_i^{(\tau)}$ and $\widetilde{u}_i^{(\tau)}$. 

For the gradient estimate, we differentiate \eqref{3.approx}:
\begin{align}\label{3.aux} 
  |\na u_i^k(x)| &\le |\na u_i^{k-1}(x)|
  + \tau_k\int_\Omega|\na J_\eta(x-y)||P_i(u^k(y))-P_i(u^k(x))|dy \\
  &\phantom{xx}+ \tau_k|\na P_i(u^k(x))|\int_\Omega J_\eta(x-y)dy
  + \tau_k|\na u_i^k(x)f_i(u^k(x))| \nonumber \\
  &\phantom{xx}+ \tau_k|u_i^k(x)\na f_i(u^k(x))| \nonumber \\
  &\le \|\na u_i^{k-1}\|_{L^\infty(\Omega)}
  + C\tau_k\|\na J_\eta\|_{L^1(\Omega)}\|P_i(u^k)\|_{L^\infty(\Omega)} 
  \nonumber \\
  &\phantom{xx}+ C\big(\|u^k\|_{L^\infty(\Omega)},J^*\big)\tau_k
  \sum_{j=1}^n|\na u_j^k(x)|. \nonumber 
\end{align}
We take the supremum over $x\in\Omega$, sum over $i=1,\ldots,n$, and choose $\tau_k>0$ sufficiently small to absorb the last term by the left-hand side. This gives an $L^\infty(\Omega)$ bound for $\na u_i^k$, which is independent of $\tau$ but depends on $\eta$ through $\|\na u_{i,\eta}^0\|_{L^\infty(\Omega)}\le C(\eta)$. When we integrate \eqref{3.aux} over $\Omega$ instead of taking the supremum, we obtain
\begin{align*}
  \|\na u_i^k\|_{L^1(\Omega)} \le \|\na u_i^{k-1}\|_{L^1(\Omega)}
  + C\tau_k\bigg(1 + \sum_{j=1}^n\|\na u_j^k\|_{L^1(\Omega)}\bigg),
\end{align*}
and after summation over $i=1,\ldots,n$ and choosing $\tau_k$ sufficiently small, this leads to an $L^1(\Omega)$ bound for $u_i^k$ uniformly in $\tau$ and $\eta$, since $\|\na u_{i,\eta}^0\|_{L^1(\Omega)}$ is bounded uniformly in $\eta$. 

Finally, we estimate the time derivative:
\begin{align*}
  |\pa_t\widetilde{u}_i^{(\tau)}(t,x)|
  &\le \int_\Omega|J_\eta(x-y)|\big|P_i(u^{(\tau)}(t,y))
  - P_i(u^{(\tau)}(t,x))\big|dy \\
  &\phantom{xx}+ \big|u_i^{(\tau)}(t,x)f_i(u^{(\tau)}(t,x))\big| 
  \le C\big(\|u^{(\tau)}\|_{L^\infty(\Omega_T)},
  \|J_\eta\|_{L^\infty(\Omega)}\big) \le C,
\end{align*}
recalling that $\|J_\eta\|_{L^\infty(\Omega)}$ was chosen independently of $\eta$. This concludes the proof.
\end{proof}

\subsection{Limit $\tau\to 0$}

The uniform bounds of Lemma \ref{lem.est2} imply that $(\widetilde{u}_i^{(\tau)})$ is bounded in $W^{1,\infty}(\Omega_T)$, which allows us to extract subsequences (not relabeled) such that, as $\tau\to 0$,
\begin{align*}
  u_i^{(\tau)}\rightharpoonup^* u_i 
  &\quad\mbox{weakly* in }L^\infty(0,T;W^{1,\infty}(\Omega)), \\
  \widetilde{u}_i^{(\tau)}\rightharpoonup^* \widetilde{u}_i
  &\quad\mbox{weakly* in }W^{1,\infty}(\Omega_T).
\end{align*}
Moreover, the $W^{1,\infty}(\Omega_T)$ bound for $(\widetilde{u}_i^{(\tau)})$ from Lemma \ref{lem.est2} and the compact embedding $W^{1,\infty}(\Omega_T)\hookrightarrow C^0(\overline{\Omega}_T)$ imply that, for a subsequence,
\begin{align*}
  \widetilde{u}_i^{(\tau)}\to \widetilde{u}_i
  \quad\mbox{strongly in }C^0(\overline{\Omega}_T). 
\end{align*}
For $t\in(t_{k-1},t_k]$, we deduce from
\begin{align*}
  \big|u_i^{(\tau)}(t,x)-\widetilde{u}_i^{(\tau)}(t,x)\big|
  = \bigg|\frac{t_k-t}{\tau_k}\big(u_i^{k-1}(x)-u_i^k(x)\big)\bigg|
  \le \tau\|\pa_t\widetilde{u}_i^{(\tau)}\|_{L^\infty(\Omega_T)}
\end{align*}
in the limit $\tau\to 0$ that $u_i=\widetilde{u}_i$ in $\Omega_T$. In particular, $u_i^{(\tau)}\to u_i$ strongly in $L^\infty(\Omega_T)$ and a.e.\ in $\Omega_T$. The uniform bound for the time derivative yields, up to a subsequence, $\pa_t\widetilde{u}_i^{(\tau)}\rightharpoonup \pa_t\widetilde{u}_i = \pa_t u_i$ weakly* in $L^\infty(\Omega_T)$. The established convergences allow us to perform the limit $\tau\to 0$ in the weak formulation of \eqref{3.tau} to infer that $u_i$ solves
\begin{align}\label{3.eta}
  \int_0^T\int_\Omega\pa_t u_i\phi_i dxdt
  = \int_0^T\int_\Omega\bigg(\int_\Omega J_\eta(x-y)
  \big(P_i(u(y))-P_i(u(x))\big)dy + u_if_i(u)\bigg)\phi_i dxdt
\end{align}
for all $\phi_i\in L^1(\Omega_T)$. We can also take the limit $\tau\to 0$ in the approximate entropy inequality \eqref{3.aei1} to obtain
\begin{align}
  \int_\Omega & h(u(t))dx + \frac{2}{s}\sum_{i=1}^n a_{i0}\int_\Omega\int_\Omega J_\eta(x-y)
  \big(u_i(x)^{s/2}-u_i(y)^{s/2}\big)^2 dydx \nonumber \\
   &\phantom{xx}+ \frac{s}{8(s-1)C(s)}\sum_{i=1}^n 
  \int_\Omega\int_\Omega J_\eta(x-y)
  \bigg(a_{ii} - C(s)\sum_{j=1, j\neq i}^n a_{ij}\bigg)
  \big(u_i(x)^s-u_i(y)^s\big)^2 dydx \label{3.etaei} \\
  &\le C\bigg(1+\int_\Omega h(u^0)dx\bigg). \nonumber 
\end{align}
In view of the strong convergence of $u_i^{(\tau)}$ in $L^\infty(\Omega_T)$ and the properties of functions in $BV(\Omega)$ \cite{AFP00}, we have
\begin{align*}
  \|u_i\|_{L^\infty(0,T;BV(\Omega))} \le \liminf_{\tau\to 0}
  \|u_i^{(\tau)}\|_{L^\infty(0,T;W^{1,1}(\Omega))}\le C,
\end{align*}
where $C>0$ is independent of $\eta$. 

Recall that the final time $T>0$ cannot be arbitrarily large. However, in view of the a priori estimates derived so far, by a standard continuation argument, we can extend our solution to the interval $[0,T]$ for an arbitrary $T>0$.

\subsection{Limit $\eta\to 0$}

Let $u^{(\eta)}$ be the solution to \eqref{3.eta} constructed in the previous subsection. Recall that by construction, $(J_\eta)$ is bounded in $L^\infty(\Omega)$. Lemma \ref{lem.est2} implies that $(u_i^{(\eta)})$ is bounded in $L^\infty(\Omega_T)$. We differentiate \eqref{3.eta} with respect to space:
\begin{align*}
  \pa_t\na u_i^{(\eta)}(x)
  &= \int_\Omega\na J_\eta(x-y)\big(P_i(u^{(\eta)}(y))
  - P_i(u^{(\eta)}(x))\big)dy \\
  &\phantom{xx}+ m_\eta(x)\bigg(a_{i0}\na u_i^{(\eta)}(x)
  + \sum_{j=1}^n a_{ij}
  \na\big(u_i^{(\eta)}(x)u_j^{(\eta)}(x)^s\big)\bigg) \nonumber \\
  &\phantom{xx}+ b_{i0}\na u_i^{(\eta)}(x)
  - \sum_{j=1}^n b_{ij}\na\big(u_i^{(\eta)}(x)u_j^{(\eta)}(x)\big), \nonumber 
\end{align*}
recalling definition \eqref{1.meta} of $m_\eta$. As $(\na J_\eta)$ is bounded in $L^1(\Omega)$ by construction, we obtain after integration over $(0,t)$ with $t\le T$:
\begin{align*}
  |\na u_i^{(\eta)}(t,x)| &\le C\|\na J_\eta\|_{L^1(\Omega)}
  \|P_i(u^{(\eta)})\|_{L^\infty(\Omega_T)} \\
  &\phantom{xx}+ C(\|u^{(\eta)}\|_{L^\infty(\Omega_T)})\sum_{j=1}^n
  \int_0^t|\na u_j^{(\eta)}(r,x)|dr + |\na u_{i,\eta}^0(x)| \le C.
\end{align*}
Since $\na u_{i,\eta}^0$ is uniformly bounded in $L^1(\Omega)$, we conclude after integration over $\Omega$ and summation over $i=1,\ldots,n$ from Gronwall's inequality a uniform bound for $\na u_i^{(\eta)}$ in $L^\infty(0,T;L^1(\Omega))$. Thus, taking into account the uniform bound for the time derivative $\pa_t\widetilde{u}_i^{(\eta)}$ from Lemma \ref{lem.est2}, the Aubin--Lions lemma and the uniform $L^\infty(\Omega_T)$ bound imply the existence of a subsequence (not relabeled) such that, as $\eta\to 0$,
\begin{align*}
  \widetilde{u}_i^{(\eta)}\to \widetilde{u}_i 
  &\quad\mbox{strongly in }L^q(\Omega_T)\mbox{ for any }p<\infty, \\
  \pa_t\widetilde{u}_i^{(\eta)}\rightharpoonup \pa_t \widetilde{u}_i
  &\quad\mbox{weakly* in }L^\infty(\Omega_T), \\
  u_i^{(\eta)}\rightharpoonup^* u_i
  &\quad\mbox{weakly* in }L^\infty(0,T;L^1(\Omega)).
\end{align*}
Similarly as in the previous subsection, it follows that $\widetilde{u}_i=u_i$ and that the limit satisfies $u_i\in L^\infty(0,T;BV(\Omega))$. It follows that $u_i\in W^{1,\infty}(0,T;L^\infty(\Omega))\cap C^0([0,T];L^\infty(\Omega)\cap BV(\Omega))$ and that $u_i$ solves \eqref{1.eq}--\eqref{1.ic}. Moreover, we can pass to the limit $\eta\to 0$ in the entropy inequality \eqref{3.etaei}. 


\section{Uniqueness of solutions}\label{sec.unique}

We prove Theorem \ref{thm.unique} by using the duality method. Let $u$ and $\bar u$ be two strong solutions to \eqref{1.eq}--\eqref{1.ic}. Then $U:=u-\bar u$ solves
\begin{align*}
  \pa_t U_i(t,x)
  &= \int_\Omega J(x-y)\big[\big(P_i(u(y))-P_i(\bar u(y))\big)
  - \big(P_i(u(x))-P_i(\bar u(x))\big)\big]dy \\
  &\phantom{xx}+ u_i(t,x)f_i(u(t,x))-\bar u_i(t,x)f_i(\bar u(t,x))
\end{align*}
for $x\in\Omega$, $t>0$, $i=1,\ldots,n$ and the initial condition $U_i(0,x)=0$. We multiply this differential equation by $\phi_i\in W^{1,1}(0,T;L^1(\Omega))$ with $\phi_i(T,x)=0$, integrate over $\Omega$, sum over $i=1,\ldots,n$, integrate by parts in time, exchange $x$ and $y$ in the nonlocal term, and use the evenness of the kernel:
\begin{align*}
  \sum_{i=1}^n&\int_0^T\int_\Omega U_i(t,x)\pa_t\phi_i(t,x)dxdt \\
  &= -\sum_{i=1}^n\int_0^T\int_\Omega\int_\Omega J(x-y)
  \big(P_i(u(t,x))-P_i(\bar u(t,x))\big)
  \big(\phi_i(t,y)-\phi_i(t,x)\big)dydxdt \\
  &\phantom{xx}+ \sum_{i=1}^n\int_0^t\int_\Omega
  \big(u_i(t,x)f_i(u(t,x))-\bar u_i(t,x)f_i(\bar u(t,x))\big)
  \phi_i(t,x)dxdt.
\end{align*}
We deduce from definition \eqref{1.p} of $P_i$ and the symmetry of $a_{ij}$ that
\begin{align*}
  P_i&(u)-P_i(\bar u) = a_{i0}(u_i-\bar u_i)
  + \sum_{j=1}^n a_{ij}\big(u_i u_j^s - \bar u_i\bar u_j^s\big) \\
  &= a_{i0}(u_i-\bar u_i)
  + \frac12\sum_{j=1}^n a_{ij}(u_i-\bar u_i)(u_j^s+\bar u_j^s)
  + \frac12\sum_{j=1}^n a_{ij}(u_i+\bar u_i)(u_j^s-\bar u_j^s) \\
  &= a_{i0}(u_i-\bar u_i)
  + \frac12\sum_{j=1}^n a_{ij}(u_i-\bar u_i)(u_j^s+\bar u_j^s)
  + \frac12\sum_{j=1}^n a_{ij}(u_j+\bar u_j)(u_i^s-\bar u_i^s) \\
  &= a_{i0}U_i
  + \sum_{j=1}^n \frac{a_{ij}}{2}U_i(u_j^s+\bar u_j^s)
  + U_iG_{ij}, \quad\mbox{where} \\
  & G_{ij}(x) = \frac{a_{ij}}{2}(u_j(x)+\bar u_j(x))
  \int_0^1 s\big(\theta u_i(x)+(1-\theta)\bar u_i(x)\big)^{s-1}d\theta.
\end{align*}
Then, using definition \eqref{1.f} of $f_i$,
\begin{align}\label{3.phi}
  & \sum_{i=1}^n\int_0^T\int_\Omega U_i(t,x)\pa_t\phi_i(t,x)dx
  = \sum_{i=1}^n\int_0^T\int_\Omega U_i(t,x)B_i[\phi](t,x)dxdt, 
  \quad\mbox{where} \\
  & B_i[\phi](x) = \int_\Omega J(x-y)(\phi_i(y)-\phi_i(x))
  \bigg(a_{i0} + \sum_{j=1}^n \frac{a_{ij}}{2}
  (u_j^s(x)+\bar u_j^s(x))\bigg)dy \nonumber \\
  &\phantom{B_i[\phi](x)=}+ \int_\Omega J(x-y)\sum_{j=1}^n(\phi_j(y)-\phi_j(x))G_{ij}(x)dy \nonumber \\
  &\phantom{B_i[\phi](x)=}+ 
  \bigg(b_{i0}-\sum_{j=1}^n b_{ij}u_j(x)\bigg)\phi_i(x)
  - \sum_{j=1}^n b_{ji}\bar u_i(x)\phi_j(x). \nonumber 
\end{align}
Let $\phi_i\in L^\infty(0,T;L^1(\Omega))$ be the solution to the problem
\begin{align}\label{3.phi2}
  \pa_t\phi_i = U_i + B_i[\phi], \quad 0<t<T, \quad \phi_i(T,x)=0.
\end{align}
Then problem \eqref{3.phi} reduces to 
\begin{align*}
  \sum_{i=1}^n\int_0^T\int_\Omega U_i(t,x)^2 dxdt = 0,
\end{align*}
which implies that $U_i=u_i-\bar u_i=0$ in $\Omega_T$ and thus proving the uniqueness of solutions.

It remains to verify that there exists a solution $\phi=(\phi_1,\ldots,\phi_n)\in L^\infty(0,T;L^1(\Omega;\R^n))$ to \eqref{3.phi2}. This is done by using the Banach fixed-point theorem. Let $T_0\in[0,T]$ and introduce $X_{T_0} = L^\infty(0,T_0;L^\infty(\Omega;\R^n))$. We define the operator $Q=(Q_1,\ldots,Q_n):X_{T_0}\to X_{T_0}$ by 
\begin{align*}
  Q_i[\phi](t,x) = \int_0^t U_i(r,x)dr + \int_0^t B_i[\phi](r,x)dr, 
  \quad x\in\Omega,\ t>0,\ i=1,\ldots,n.
\end{align*}
We deduce from $U_i$, $J$, $G_{ij}\in L^\infty(\Omega_T)$ that
\begin{align*}
  \|Q[\phi]\|_{X_{T_0}} \le C(T_0 + \|\phi\|_{X_{T_0}})
\end{align*}
and consequently, $Q(X_{T_0})\subset X_{T_0}$, proving the well-posedness of $Q$. We show the contraction property of $Q_i$. Let $\psi$, $\bar\psi\in X_{T_0}$. Then
\begin{align*}
  |Q_i[\psi](x)-Q_i[\bar\psi](x)|
  \le \int_0^t|B_i[\psi-\bar\psi](x)|dr
  \le CT_0\|\psi-\bar\psi\|_{X_{T_0}}
\end{align*}
and, after taking the supremum over $\Omega$,
\begin{align*}
  \|Q[\psi]-Q[\bar\psi]\|_{X_{T_0}} 
  \le CT_0\|\psi-\bar\psi\|_{X_{T_0}}.
\end{align*}
Hence, $Q$ is a contraction if we choose $T_0<1/C$. Banach's fixed-point theorem yields a unique fixed point $\phi\in X_{T_0}$ to \eqref{3.phi2}. As the constant $C>0$ is independent of $T_0$, we can repeat the arguments to obtain eventually a unique solution $\phi$ to \eqref{3.phi2} defined on $[0,T]$. This finishes the proof of Theorem \ref{thm.unique}.

\begin{remark}[Long-time asymptotics]\label{rem.time}\rm 
We show that the solution to \eqref{1.eq}--\eqref{1.ic} with $f_i(u)=0$ and $\kappa_0>0$ (see Theorem \ref{thm.ex}) converges with exponential rate to the constant steady state if $s=2$. We define the constant steady state $u_i^\infty$ by
\begin{align*}
  u_i^\infty = \fint_\Omega u_i^0 dx 
  := \frac{1}{\operatorname{meas}(\Omega)}
  \int_\Omega u_i^0 dx, \quad i=1,\ldots,n. 
\end{align*}
The relative entropy becomes in case $s=2$:
\begin{align*}
  H(u|u^\infty) = \int_\Omega\big(h(u)-h(u^\infty)-h'(u^\infty)
  \cdot(u-u^\infty)\big)dx = \int_\Omega(u-u^\infty)^2 dx,
\end{align*}
where we used the fact that $u$ and $u^\infty$ have the same mass. Then, by inequality \eqref{1.ei} and mass conservation,
\begin{align*}
  \frac{d}{dt}H(u|u^\infty) = \frac{d}{dt}\int_\Omega h(u)dx
  \le -\kappa_0\sum_{i=1}^n\int_\Omega\int_\Omega J(x-y)
  (u_i(x)-u_i(y))^2 dydx.
\end{align*}
We deduce from the nonlocal Poincar\'e inequality \cite[Sec.~4]{CCR06} that
\begin{align*}
  \bigg\|u_i - \fint_\Omega u_i dx\bigg\|_{L^2(\Omega)}^2 
  \le C_P\int_\Omega\int_\Omega J(x-y)(u_i(x)-u_i(y))^2 dx,
\end{align*}
leading to, because of $\fint u_idx = u_i^\infty$,
\begin{align*}
  \frac{d}{dt}H(u|u^\infty) \le -\frac{\kappa_0}{C_P}
  \sum_{i=1}^n\int_\Omega(u_i-u^\infty_i)^2 dx
  = -\frac{\kappa_0}{C_P}H(u|u^\infty).
\end{align*}
Then Gronwall's inequality implies that $H(u(t)|u^\infty)\le H(u^0|u^\infty)e^{-\kappa_0 t/C_P}$ for $t>0$.
\end{remark}

\section{Numerical experiment}\label{sec.numer}

We illustrate the convergence to the steady state, the localization limit, and the behavior of the solutions for different values of the exponent $s$ by numerical experiments for the two-species model in one space dimension. We have chosen $\Omega=(0,1)$, neglected the reaction terms ($b_{i0}=b_{ij}=0$), and taken the kernel function
\begin{align*}
  J_\eps(x) = \frac{K}{\eps}J\bigg(\frac{x}{\eps}\bigg),
  \quad\mbox{where }K^{-1} = \frac12\int_0^1 J(x)x^2dx,\quad
  J = \mathrm{1}_{[-0.5,0.5]},
\end{align*}
where $\eps>0$ is the localization parameter. Equations \eqref{1.eq} are discretized by a semi-implicit scheme, where the integral is approximated by a rectangular rule. The spatial grid size is $\Delta x = 0.02$, and the time step size equals $\Delta t = 0.004$. The diffusion coefficients are
\begin{align*}
  a_{10}=a_{20}=2, \quad a_{11} = 1,\quad a_{12}=a_{21}=0.4, 
  \quad a_{22} = 1.5.
\end{align*}
In particular, the matrix $(a_{ij})$ is symmetric positive definite. We have taken the initial data
\begin{align*}
  u_1^0(x) = \frac{1}{K_1}\exp\bigg(-\frac{(x-0.45)^2}{0.02}\bigg),
  \quad 
  u_2^0(x) = \frac{1}{K_2}\exp\bigg(-\frac{(x-0.65)^2}{0.02}\bigg),
\end{align*}
and $K_1>0$, $K_2>0$ are chosen such that the initial data is normalized. 

Figure \ref{fig.loc} shows the solutions for various values of $\eps$ and for $s=1.2$. We observe that for small $\eps$, the nonlocal solution approximates the local solution accurately. Moreover, the solutions approach, as expected, the constant steady state as $t\to\infty$. 

\begin{figure}[ht]
\includegraphics[width=65mm]{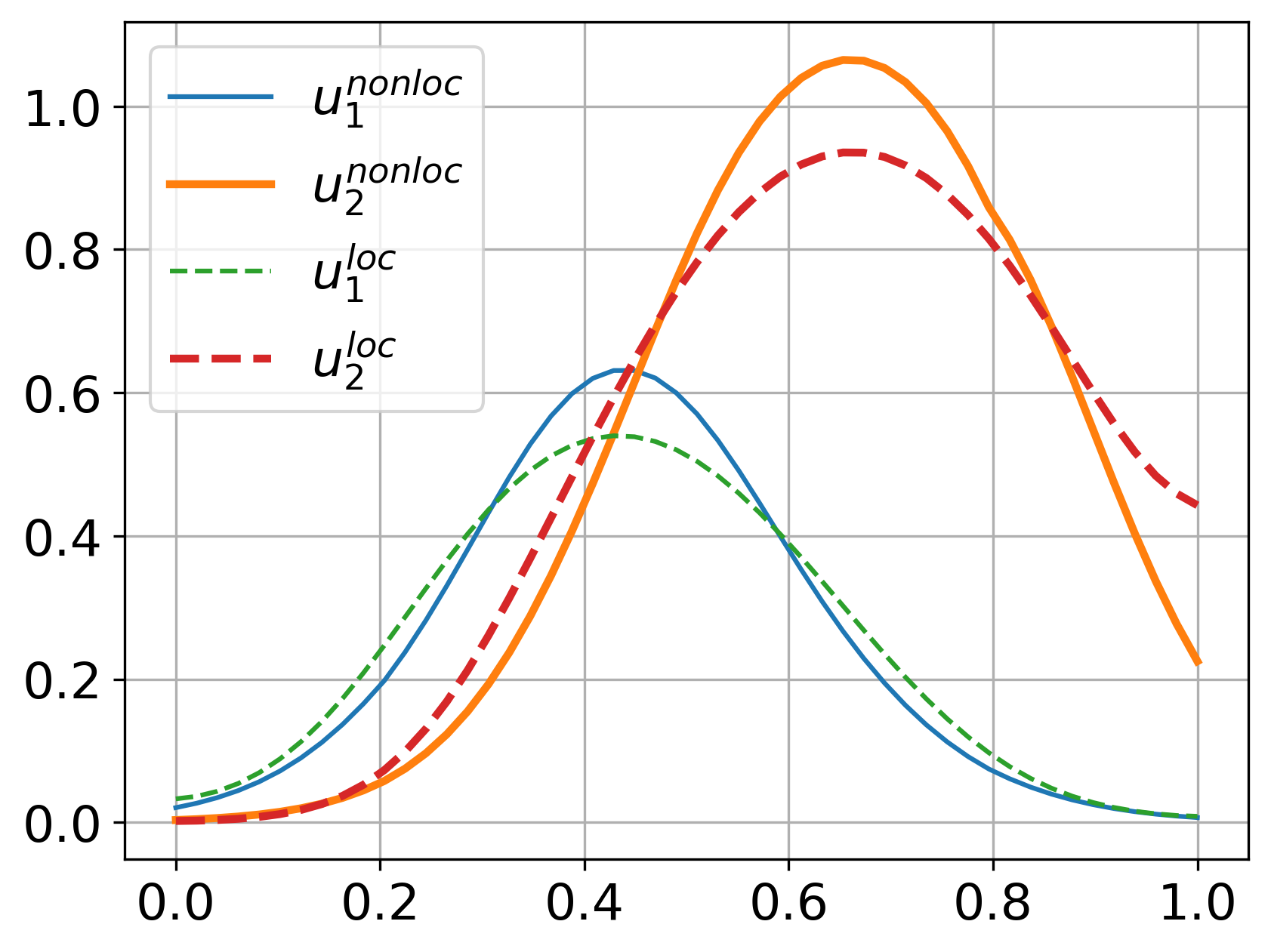}
\includegraphics[width=65mm]{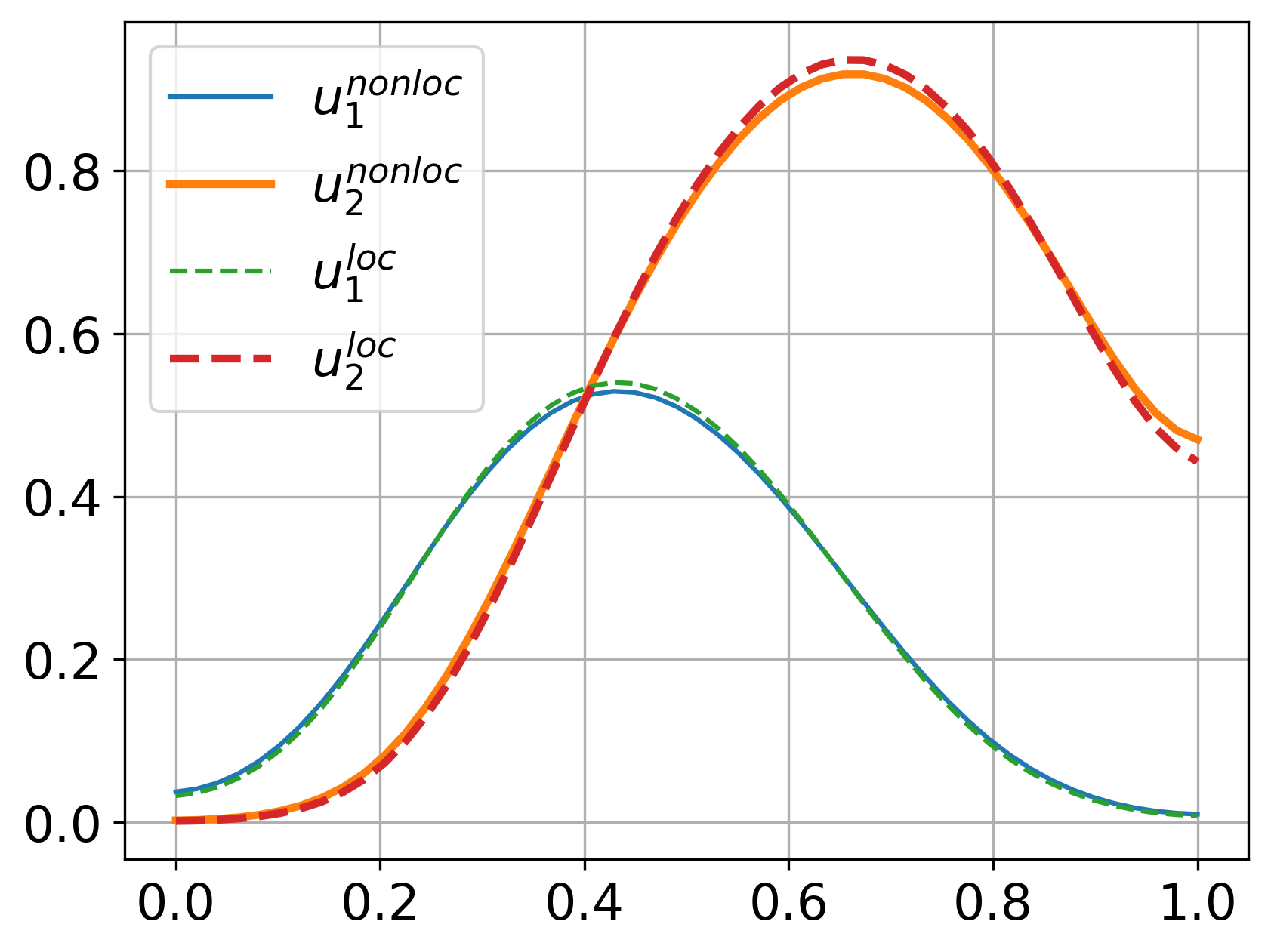}
\includegraphics[width=65mm]{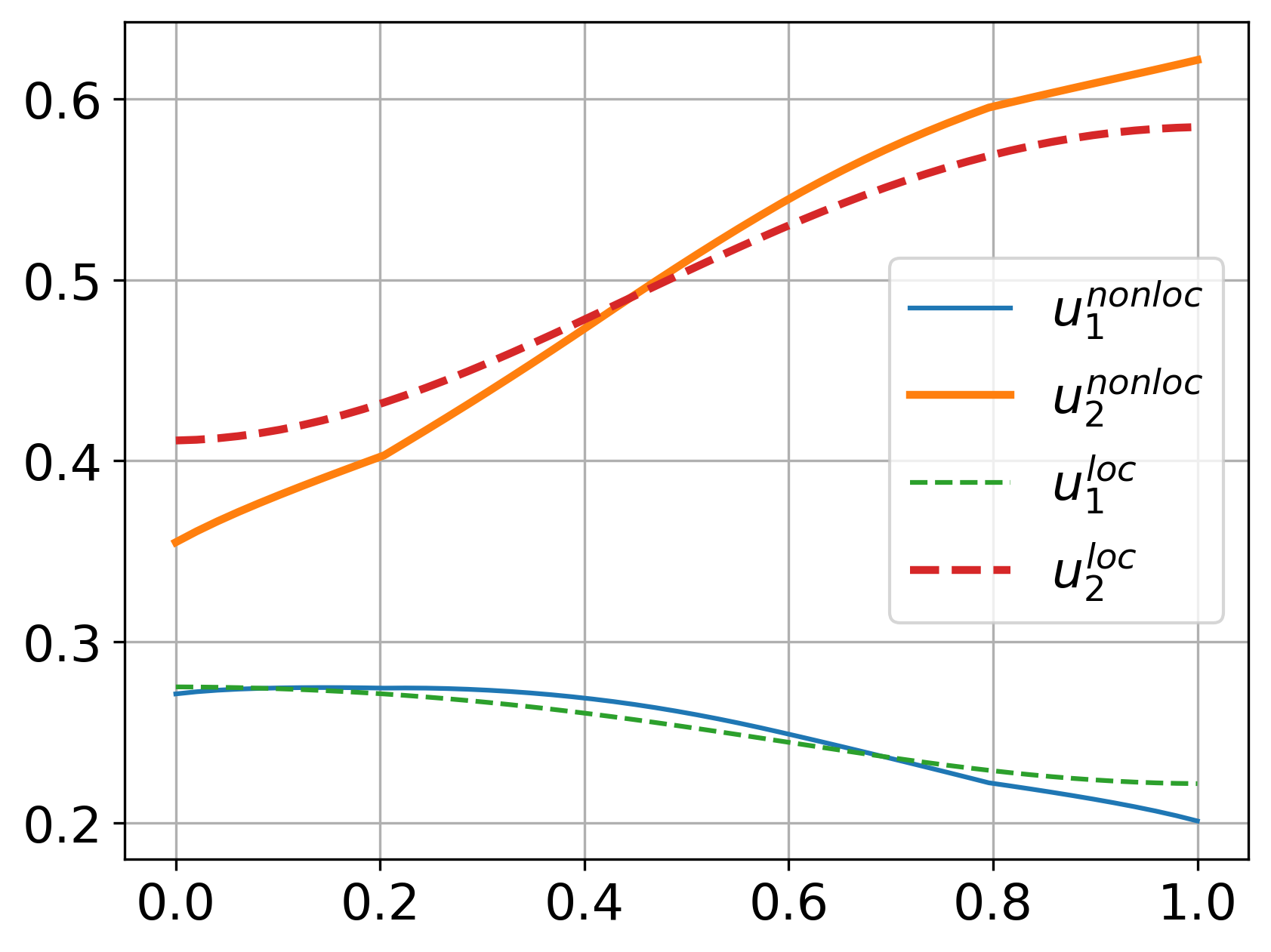}
\includegraphics[width=65mm]{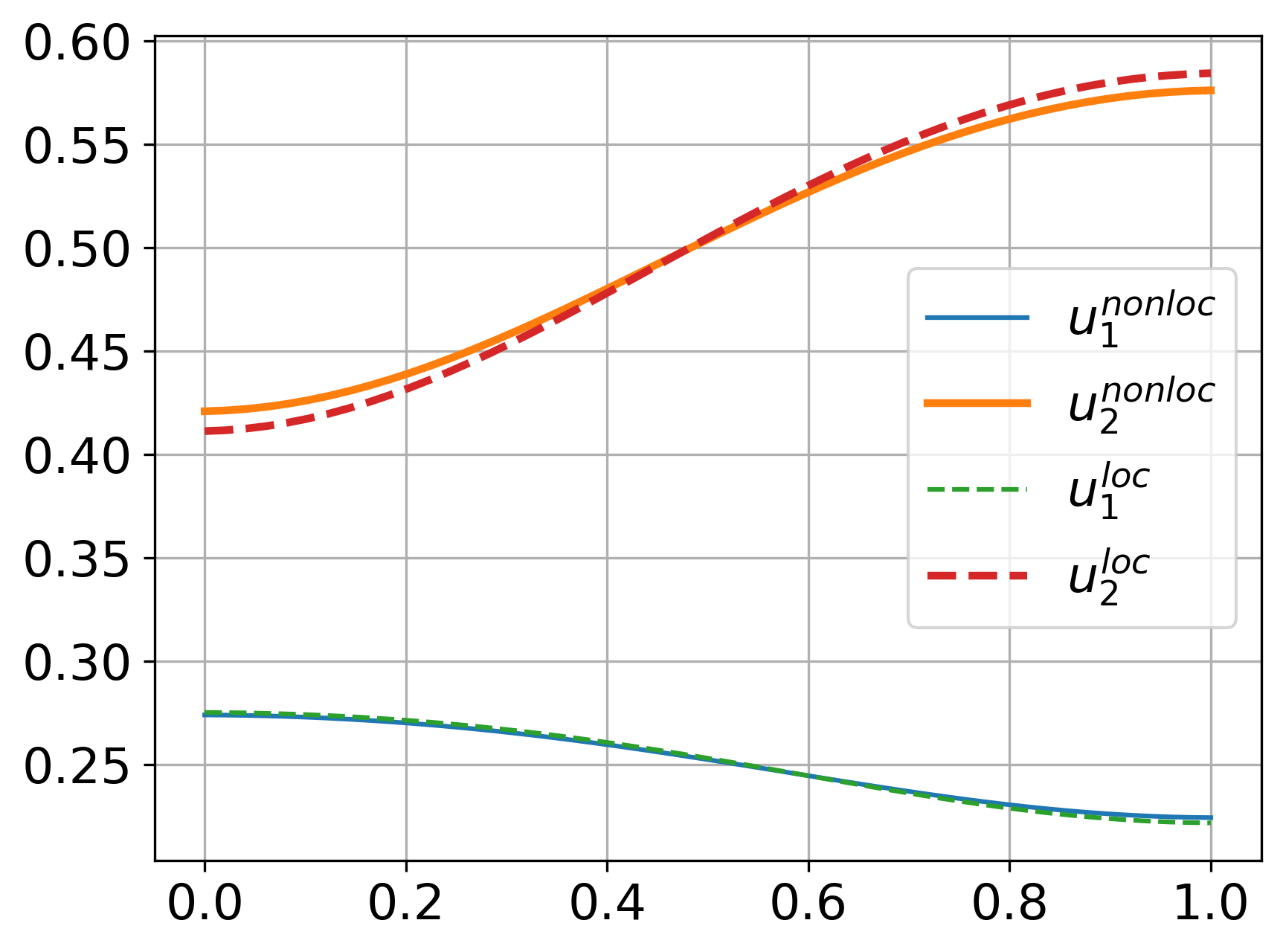}
\includegraphics[width=65mm]{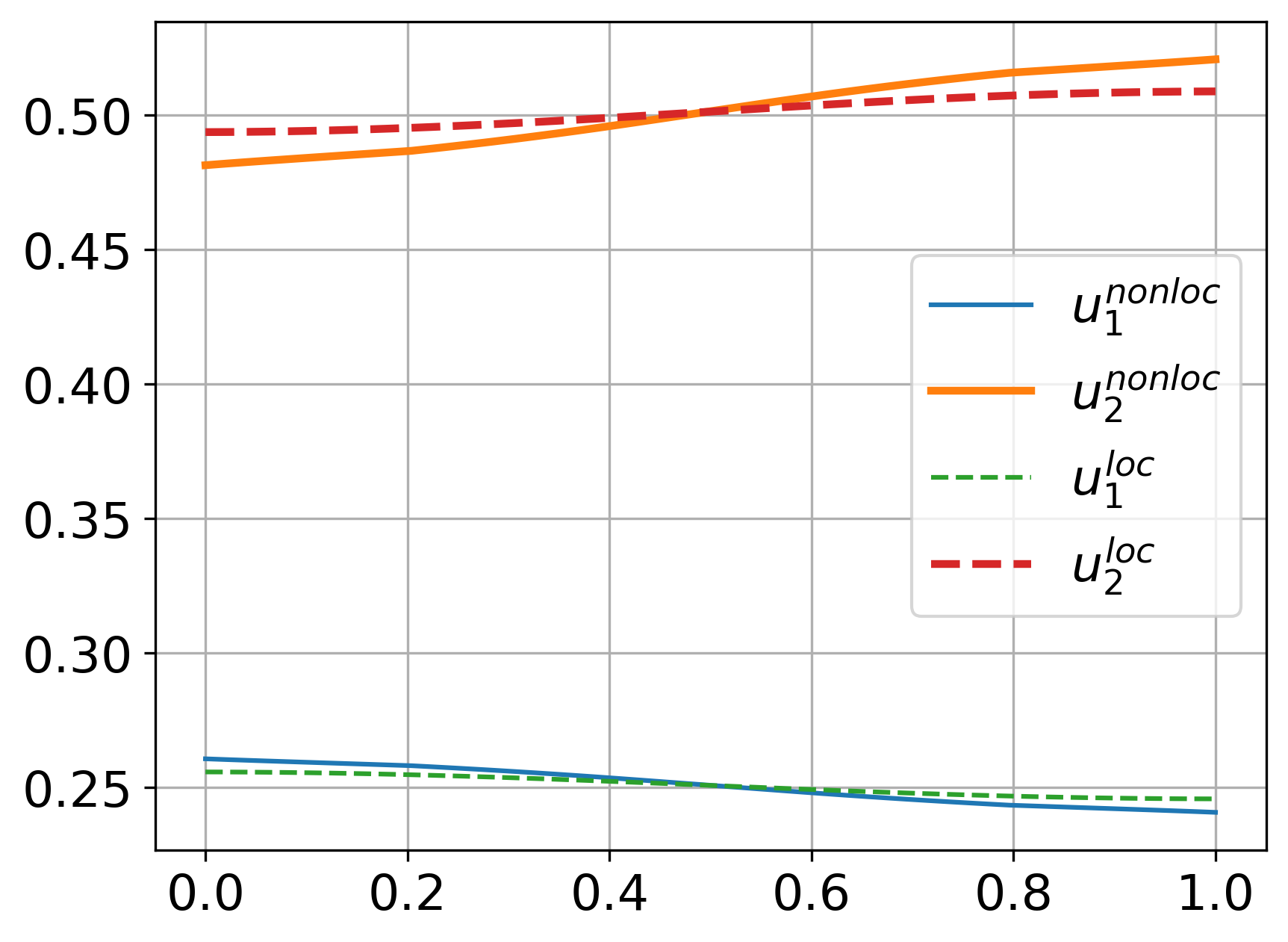}
\includegraphics[width=65mm]{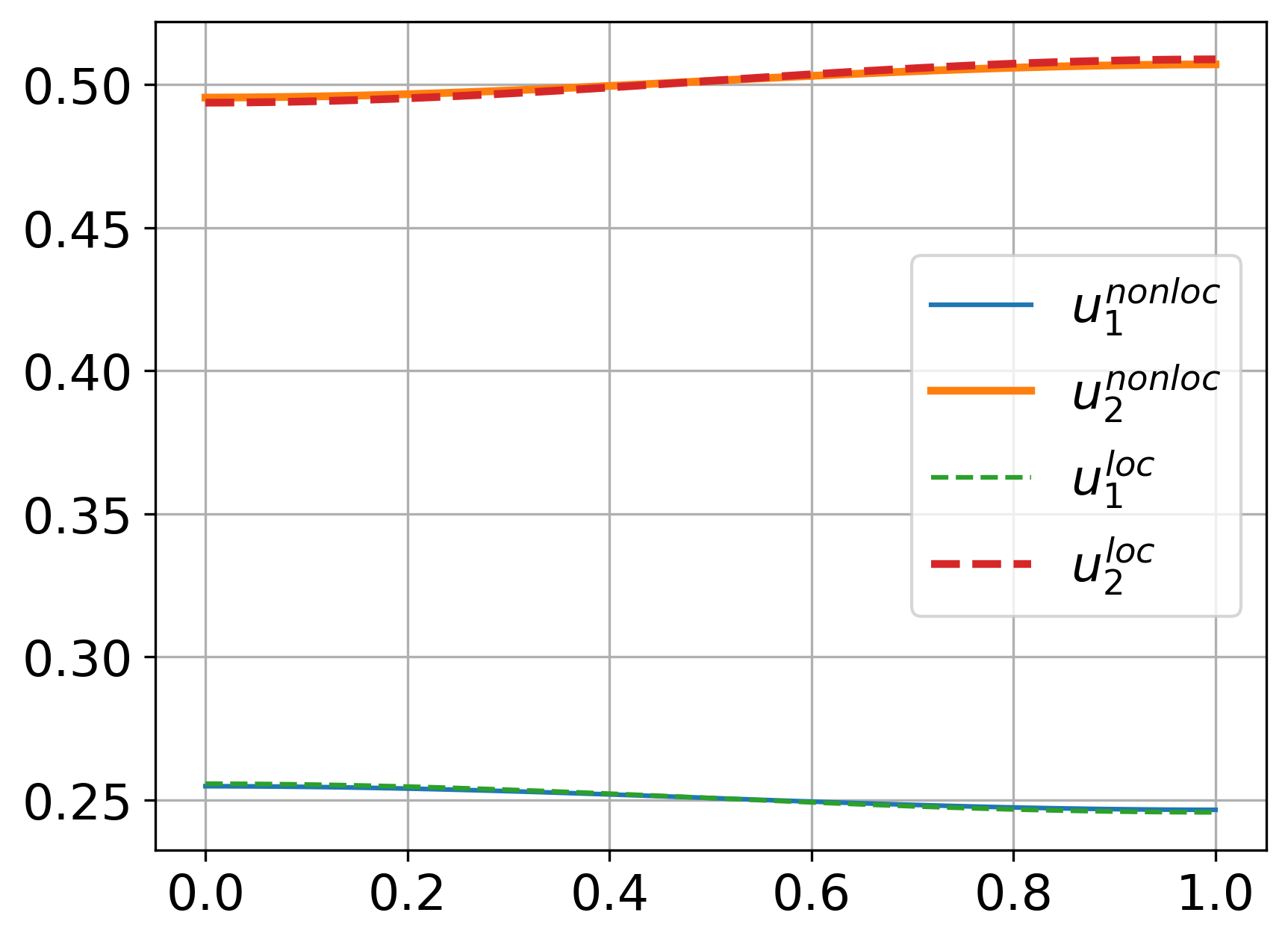}
\caption{Solutions $u_1$, $u_2$ to the nonlocal and local model for $\eps=0.4$ (left column) and $\eps= 0.05$ (right column) at times $t=50$ (top row)  and $t=600$ (middle row), and $t=1500$ (bottom row).}
\label{fig.loc}
\end{figure}

Next, we fix $\eps=0.08$. Figure \ref{fig.eps} illustrates the behavior of the solutions for different values of the exponent $s$. The simulations suggest that the solutions behave similarly for different values of $s$, indicating that analogous analytical results should also hold for $s < 1$.

\begin{figure}[ht]
\includegraphics[width=65mm]{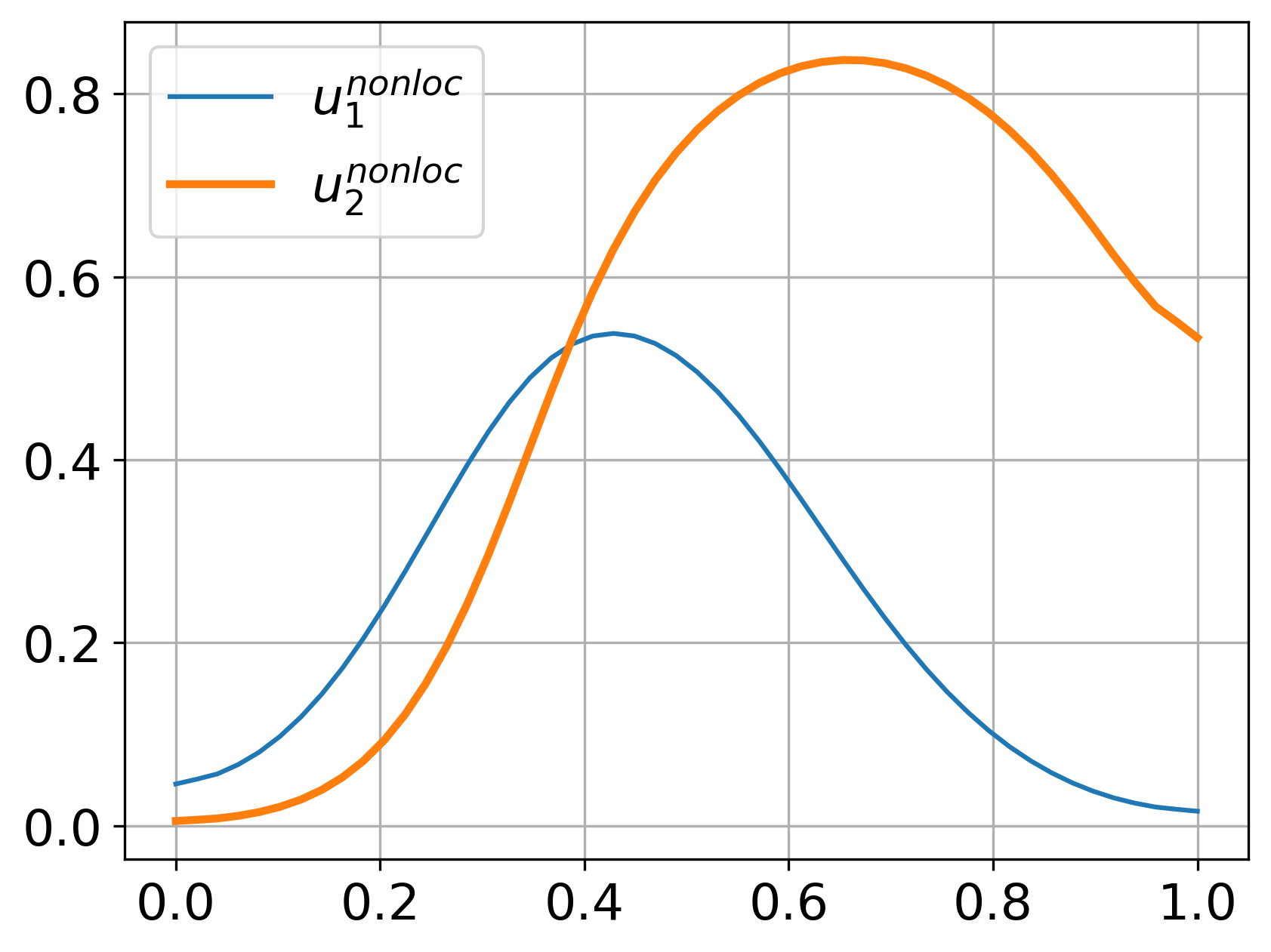}
\includegraphics[width=65mm]{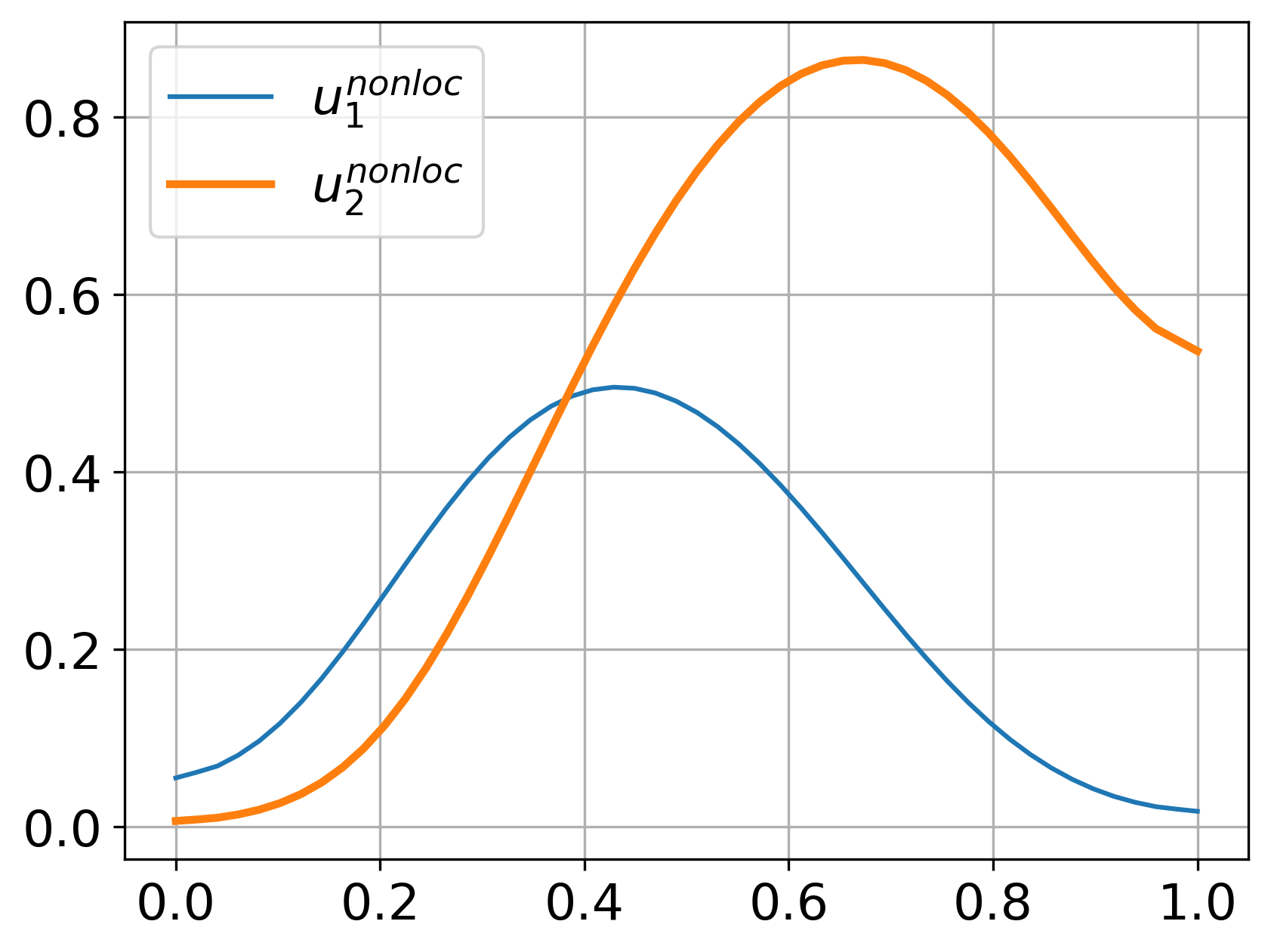}
\includegraphics[width=65mm]{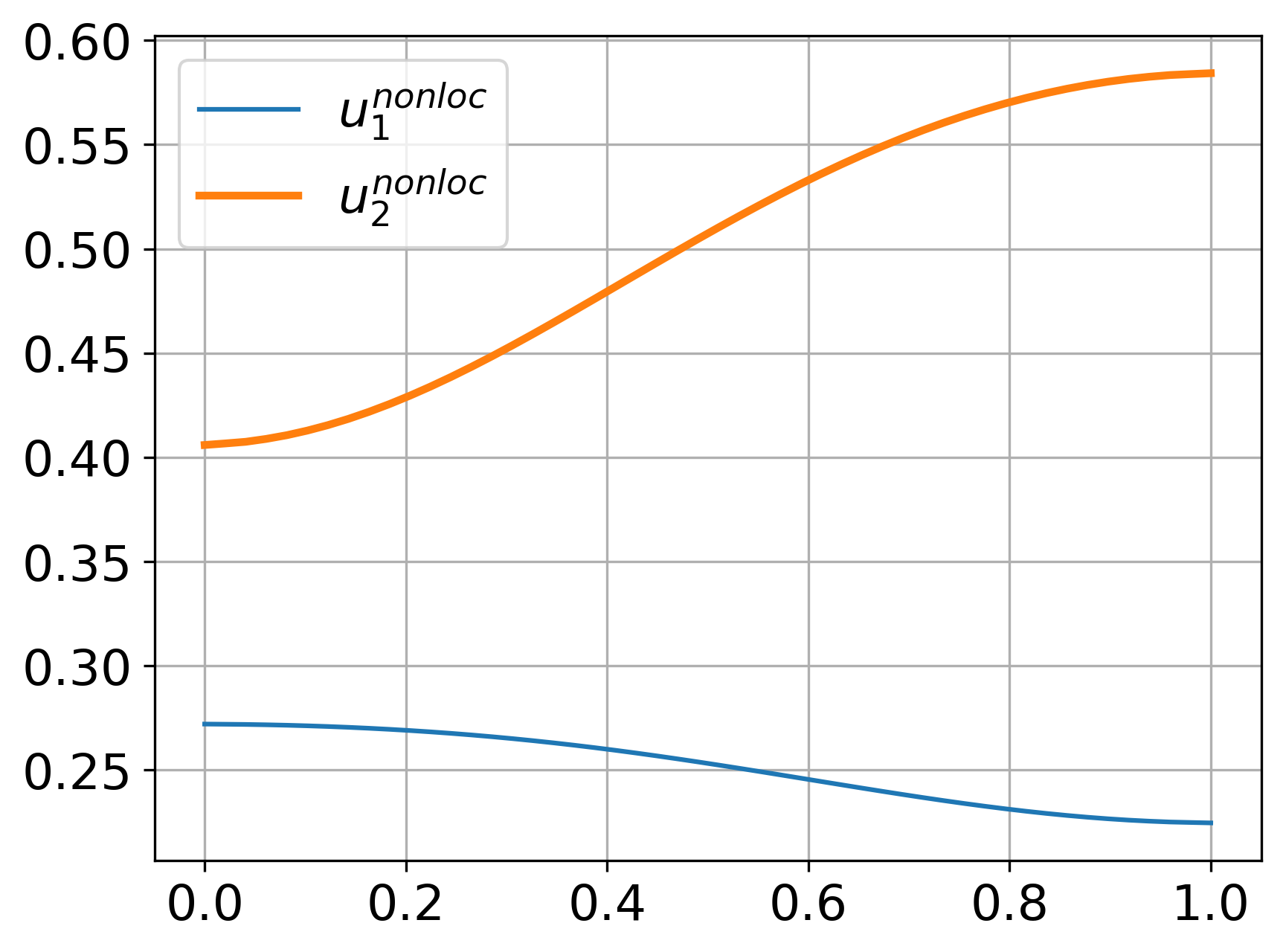}
\includegraphics[width=65mm]{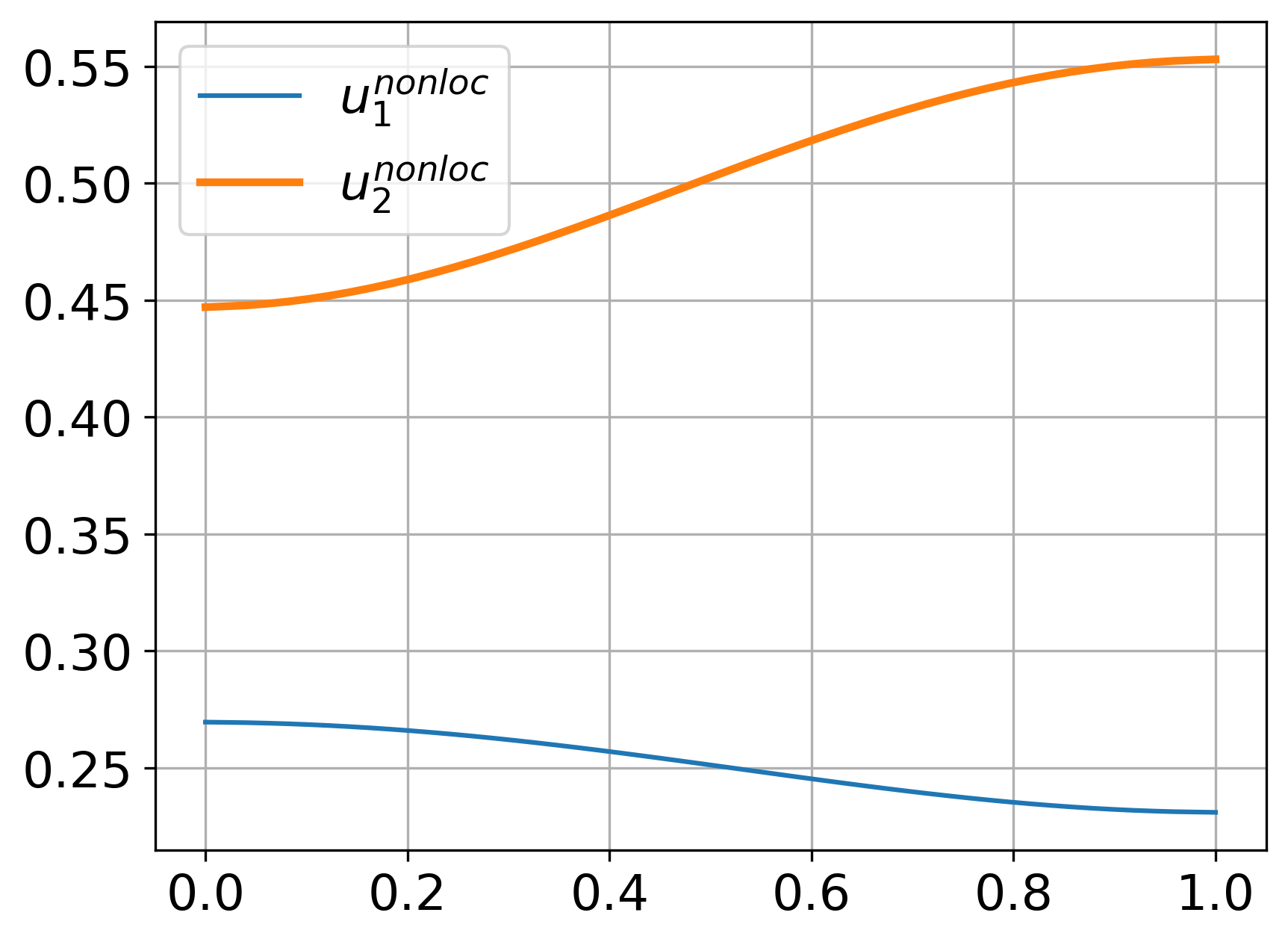}
\caption{Solutions $u_1$, $u_2$ to the nonlocal model for $s=0.5$ (left column) and $s=4$ (right column) at times $t=50$ (top row) and $t=600$ (bottom row).}
\label{fig.eps}
\end{figure}


\begin{appendix}
\section{Entropy-dissipating finite-volume schemes}\label{sec.fvm}

We show in this section that the discrete chain rules in Lemmas \ref{lem.chain1}--\ref{lem.chain2} can be used to construct structure-preserving finite-volume schemes for cross-diffusion systems of the type $\pa_t u_i = \Delta P_i(u) + u_if_i(u)$ with initial and no-flux boundary conditions. This extends the results in \cite{JuZu21}. 

We discretize the domain $\Omega$ by an admissible triangulation in the sense of \cite[Def.~9.1]{EGH00}. The triangulation consist of a family $\mathcal{T}$ of open polygonal convex subsets of $\Omega$ (cells or control volumes), a family $\mathcal{E}$ of edges or faces, and a family $\mathcal{P}=(x_K)_{K\in\mathcal{T}}$ of points associated to the cells. The triangulation is chosen in such a way that the line segment connecting the points $x_K$ and $x_L$ of two neighboring cells is
orthogonal to their common edge $\sigma = K|L$. The family of edges $\mathcal{E}$ is split into internal and external edges $\mathcal{E}=\mathcal{E}_{\rm int}\cup\mathcal{E}_{\rm ext}$, where $\mathcal{E}_{\rm int} = \{\sigma\in\mathcal{E}:\sigma\subset\Omega\}$ and $\mathcal{E}_{\rm ext} = \{\sigma\in\mathcal{E}: \sigma\subset\pa\Omega\}$. The set $\mathcal{E}_K$ contains all edges associated to the cell $K$. For given $\sigma\in\mathcal{E}$, we define
\begin{align*}
  \rm{d}_\sigma = \begin{cases}
  d(x_K,x_L) &\mbox{if }\sigma=K|L\in\mathcal{E}_{\rm int}, \\
  d(x_K,\sigma) &\mbox{if }\sigma\in\mathcal{E}_{\rm ext},
  \end{cases}
\end{align*}
where $d(x,y)$ is the Euclidean distance between two points $x$ and $y$. The transmissibility coefficient is defined by $\tau_\sigma= \mathrm{m}(\sigma)/\rm{d}_\sigma$, where $\mathrm{m}(\sigma)$ denotes the Lebesgue measure of $\sigma$. 

We use a uniform time discretization with time step $\Delta t>0$, and we set $t_k=k\Delta t$ for $k=1,\ldots,N$, where $T>0$, $N\in\N$, and $\Delta t=T/N$. Finally, we define for $v=(v_K)_{K\in\mathcal{T}}$ and $\sigma\in\mathcal{E}$ the difference
\begin{align*}
  \mathrm{D}_{K,\sigma}v = v_{K,\sigma}-v_K, \quad\mbox{where}\quad
  v_{K,\sigma} = \begin{cases}
  v_L &\mbox{if }\sigma=K|L\in\mathcal{E}_{{\rm int},K}, \\
  v_K &\mbox{if }\sigma\in\mathcal{E}_{{\rm ext},K}.
  \end{cases}
\end{align*}

The implicit Euler finite-volume scheme reads as
\begin{align}\label{4.eq}
  \mathrm{m}(K)\frac{u_{i,K}^k-u_{i,K}^{k-1}}{\Delta t}
  - \sum_{\sigma\in\mathcal{E}_K}\tau_\sigma
  \mathrm{D}_{K,\sigma}P_i(u_K^k)
  &= \mathrm{m}(K)u_{i,K}^kf_i(u_K^k).
\end{align}
The advantage of the present scheme over that proposed in \cite{JuZu21} is that it neither requires the introduction of edge values nor the evaluation of a Stolarsky-type mean function, which may be computationally delicate. By construction, scheme \eqref{4.eq} preserves the local flux and the total mass. The important point is that this scheme also preserves the entropy structure in the following sense.

\begin{proposition}[Discrete entropy inequality]
Let Assumptions (A5a) or (A5b) hold and $(u_K^k)_{K\in\mathcal{T}}$ be a solution to \eqref{4.eq}. Then there exist constants $C_0$, $C_1>0$ independent of the size of the cells and the time step size $\Delta t$ such that 
\begin{align*}
  \sum_{K\in\mathcal{K}}&\mathrm{m}(K)h(u_K^k)
  - \sum_{K\in\mathcal{K}}\mathrm{m}(K)h(u_K^{k-1})
  + C_0\Delta t\sum_{\sigma\in\mathcal{E}}\tau_\sigma
  \big((u_{i,L}^k)^{s/2}-(u_{i,K}^k)^{s/2}\big)^2 \\
  &+ C_0\Delta t\sum_{i=1}^n\sum_{\sigma\in\mathcal{E}}\tau_\sigma
  \big((u_{i,L}^k)^{s}-(u_{i,K}^k)^{s}\big)^2
  \le C_1 + C_1\Delta t\sum_{K\in\mathcal{T}}\mathrm{m}(K)h(u_K^k),
\end{align*}
which is the analog of the nonlocal entropy inequality \eqref{1.ei}. 
\end{proposition}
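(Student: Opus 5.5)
The plan is to repeat the proof of Lemmas \ref{lem.aei1} and \ref{lem.aei2}, with the double integral against $J_\eta(x-y)$ replaced by the sum over edges weighted by $\tau_\sigma$. First, multiply \eqref{4.eq} by $h_i'(u_{i,K}^k)=(s(u_{i,K}^k)^{s-1}-1)/(s-1)$, sum over $i=1,\ldots,n$ and $K\in\mathcal{T}$, and multiply by $\Delta t$.

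For the time term, use convexity of $h_i$, namely $(u_{i,K}^k-u_{i,K}^{k-1})h_i'(u_{i,K}^k)\ge h_i(u_{i,K}^k)-h_i(u_{i,K}^{k-1})$. This gives the entropy difference.

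For the diffusion term, use discrete integration by parts. Since external edges contribute $\mathrm{D}_{K,\sigma}=0$ and every interior edge $\sigma=K|L$ appears once from $K$ and once from $L$ with opposite sign,
\begin{align*}
  -\sum_{K\in\mathcal{T}}\sum_{\sigma\in\mathcal{E}_K}\tau_\sigma\mathrm{D}_{K,\sigma}P_i(u^k_K)\,h_i'(u_{i,K}^k)
  = \frac{s}{s-1}\sum_{\sigma=K|L\in\mathcal{E}_{\rm int}}\tau_\sigma
  \big(P_i(u_L^k)-P_i(u_K^k)\big)\big((u_{i,L}^k)^{s-1}-(u_{i,K}^k)^{s-1}\big).
\end{align*}
The constant part of $h_i'$ cancels. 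Set $X_i=u_{i,L}^k$ and $Y_i=u_{i,K}^k$. Each summand is then exactly the pointwise expression $I_{11}+I_{12}+I_{13}$ from \eqref{3.I123}, with $\tau_\sigma$ in place of $J_\eta(x-y)$ and a factor $s/(s-1)$ instead of $s/(2(s-1))$.

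The estimates from the proofs of Lemmas \ref{lem.aei1} and \ref{lem.aei2} were pointwise in $(X,Y)$, so they apply verbatim:
\begin{itemize}
\item $I_{11}$ is bounded below via Lemma \ref{lem.chain1} with $\beta=s-1$, $\gamma=1$.
\item $I_{13}$ is bounded below via Lemma \ref{lem.chain1} with $\beta=s+1$, $\gamma=s-1$, and then split with \eqref{2.ineq12} or \eqref{2.ineq2} according to whether $1<s<2$ or $s\ge2$.
\item $I_{12}$ is handled with the splitting $X_iX_j^s-Y_iY_j^s=\tfrac12(X_i-Y_i)(X_j^s+Y_j^s)+\tfrac12(X_i+Y_i)(X_j^s-Y_j^s)$. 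Under (A5a) follow this by Young's inequality and the symmetry of $(a_{ij})$. Under (A5b) use \eqref{2.ineq1} and positive definiteness instead.
\end{itemize}

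The one point to watch is the symmetrization step that moves cross terms from index $j$ to index $i$. Here it is per edge and purely algebraic in $(X_i,Y_i,X_j,Y_j)$, so it goes through unchanged. This yields the dissipation terms with $C_0$ proportional to $\min_i a_{i0}$ (coefficient $4/s$ here) and to $\kappa(s)$, respectively to the smallest eigenvalue $\alpha$. Nothing in these bounds depends on the mesh or on $\Delta t$.

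The reaction terms are cellwise identical to $I_{21}$ and $I_{22}$. They are bounded by $C\,\mathrm{m}(K)(1+h(u^k_K))$, using $u_{i,K}^k\ge0$ and the elementary bounds $z\le C(1+h_i(z))$ and $z^s\le C(1+h_i(z))$. Summing over $K$, with $\sum_K\mathrm{m}(K)=\mathrm{m}(\Omega)$, gives the right-hand side, with $C_1$ depending only on the coefficients $b_{ij}$, $s$ and $\Omega$.

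The main obstacle is the standing requirement that $u_K^k\ge 0$, which the proposition implicitly presupposes. It is needed so that the chain-rule inequalities, which are stated for $a,b\ge0$, apply. If nonnegativity has to be proved, I would mimic the nonlocal argument: replace $P_i(u)$ by $P_i(u^+)$ in the scheme and test with $\min\{u_{i,K}^k,0\}$.

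Finally, the stated right-hand side reads $C_1$ rather than $C_1\Delta t$. I will prove the stronger version $C_1\Delta t\,(1+\sum_K\mathrm{m}(K)h(u_K^k))$, which implies the stated form.
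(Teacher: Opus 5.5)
Your reduction is the same as the paper's. You test with $h_i'(u_{i,K}^k)$, use convexity for the time term, and apply discrete integration by parts to get the edgewise quantity $I_{11}+I_{12}+I_{13}$ with $X_i=u_{i,L}^k$, $Y_i=u_{i,K}^k$. Your factor $s/(s-1)$, hence $4/s$ instead of $2/s$, is the correct one. You then reuse the pointwise estimates of Lemma~\ref{lem.aei1} and bound the reaction terms cellwise. Under (A5a) this is complete. Your remark on nonnegativity is also correct: testing the $u^+$-version of \eqref{4.eq} with $\min\{u_{i,K}^k,0\}$ works, because the diffusion contribution is then nonnegative edge by edge. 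So is your sharper right-hand side $C_1\Delta t(1+\sum_K\mathrm{m}(K)h(u_K^k))$.

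The gap is the (A5b) case, which you dispatch with ``use \eqref{2.ineq1} and positive definiteness''. That is exactly the argument of Lemma~\ref{lem.aei2}, and it does not close.

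Write $E_i=X_i^s-Y_i^s$. By \eqref{2.ineq1}, $(X_i+Y_i)(X_i^{s-1}-Y_i^{s-1})=\lambda_iE_i$ with a factor $\lambda_i\in[\frac{s-1}{s},\frac{2s-1}{s}]$ that depends only on $X_i/Y_i$. Hence, after symmetrizing in $(i,j)$, the part of $I_{12}$ you keep equals $\frac12\sum_{i<j}a_{ij}(\lambda_i+\lambda_j)E_iE_j$. A lower bound $c\sum_{i<j}a_{ij}E_iE_j$ with one constant $c$ would require $c\le\frac12(\lambda_i+\lambda_j)$ wherever $E_iE_j>0$, but $c\ge\frac12(\lambda_i+\lambda_j)$ wherever $E_iE_j<0$. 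No such $c$ exists, for two reasons:
\begin{itemize}
\item $\lambda_i$ genuinely varies. It tends to $2(s-1)/s$ as $X_i/Y_i\to1$ and to $1$ as $X_i/Y_i\to0$ or $\infty$. It is constant only for $s=2$, where $(X+Y)(X-Y)=X^2-Y^2$.
\item Swapping $X_i$ and $Y_i$ flips the sign of $E_i$ but leaves $\lambda_i$ unchanged.
\end{itemize}

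Equivalently, you are left with the form $E^\top\Lambda AE$, where $\Lambda=\mathrm{diag}(\lambda_1,\ldots,\lambda_n)$, and $\Lambda A+A\Lambda$ need not be positive definite for a symmetric positive definite $A$. For $a_{11}=a_{22}=1$ and $a_{12}=a_{21}=b<1$, its determinant is $4\lambda_1\lambda_2-b^2(\lambda_1+\lambda_2)^2$. This is negative for $b$ close to $1$ whenever $\lambda_1\ne\lambda_2$.

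So positive definiteness of $(a_{ij})$ does not by itself absorb the cross terms. You need an additional ingredient. One option is to keep the discarded nonnegative term $\frac12\sum_{i\ne j}a_{ij}(X_i-Y_i)(X_i^{s-1}-Y_i^{s-1})(X_j^s+Y_j^s)$ and use it to control $|E_iE_j|$. Another is to impose a diagonal-dominance condition in the spirit of (A5a). As written, the route you indicate (like the paper's) yields the (A5b) case only for $s=2$. There $\lambda_i\equiv1$ and $I_{12}+I_{13}\ge\frac12\sum_{i,j}a_{ij}E_iE_j\ge\frac{\alpha}{2}\sum_i E_i^2$.
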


\begin{proof}
Assume that Assumption (A5a) holds. We multiply the scheme by the function $h'_i(u_{i,K}^k) = (s (u_{i,K}^k)^{s-1}-1)/(s-1)$ and sum over $K\in\mathcal{T}$ and $i=1,\ldots,n$, leading to $J_0+J_1+J_2=0$, where
\begin{align*}
  J_0 &= \sum_{i=1}^n\sum_{K\in\mathcal{K}}\mathrm{m}(K)
  \frac{u_{i,K}^k-u_{i,K}^{k-1}}{\Delta t}h'_i(u_{i,K}^k), \\
  J_1 &= -\sum_{i=1}^n\sum_{K\in\mathcal{K}}
  \sum_{\sigma\in\mathcal{E}_K}\tau_\sigma
  \mathrm{D}_{K,\sigma}P_i(u^k)h'_i(u_{i,K}^k), \\
  J_2 &= -\sum_{i=1}^n\sum_{K\in\mathcal{K}}\mathrm{m}(K)
  u_{i,K}^kf_i(u_K^k)h'_i(u_{i,K}^k).
\end{align*}
The convexity of $h_i$ implies that
\begin{align*}
  J_0\ge \frac{1}{\Delta t}\sum_{i=1}^n
  \sum_{K\in\mathcal{K}}\mathrm{m}(K)
  \big(h_i(u_{i,K}^{k}) - h_i(u_{i,K}^{k-1})\big)
  = \frac{1}{\Delta t}\sum_{K\in\mathcal{K}}\mathrm{m}(K)
  \big(h(u_{K}^{k}) - h(u_{K}^{k-1})\big).
\end{align*}
By discrete integration by parts, 
\begin{align*}
  J_1 &= \sum_{i=1}^n\sum_{\sigma\in\mathcal{E}}\tau_\sigma
  \mathrm{D}_{K,\sigma}P_i(u^k)\mathrm{D}_{K,\sigma}h'_i(u_{i}^k) \\
  &= \sum_{i=1}^n\sum_{\sigma=K|L\in\mathcal{E}_{\rm int}}\tau_\sigma
  \big(P_i(u_L^k) - P_i(u_K^k)\big)
  \big(h'_i(u_{i,L}^k) - h'_i(u_{i,K}^k)\big)
\end{align*}
This expression resembles the term $I_1$ in \eqref{3.I12} with $u_{i,L}^k = X_i$ and $u_{i,K}^k = Y_i$ in the proof of Lemma \ref{lem.aei1}, and repeating the arguments therein, we find that
\begin{align*}
  J_1 &\ge \frac{2}{s}\sum_{i=1}^n a_{i0}
  \sum_{\sigma\in\mathcal{E}}\tau_\sigma
  \big((u_{i,L}^k)^{s/2}-(u_{i,K}^k)^{s/2}\big)^2 \\
  &\phantom{xx}+ \frac{s}{8(s-1)C(s)}\sum_{i=1}^n
  \bigg(a_{ii}-C(s)\sum_{j=1, j\neq i}^n a_{ij}\bigg)
  \sum_{\sigma\in\mathcal{E}}\tau_\sigma
  \big((u_{i,L}^k)^{s}-(u_{i,K}^k)^{s}\big)^2.
\end{align*}
The term $J_2$ resembles $I_2$ in \eqref{3.I12} and can be estimated in the same way, giving
\begin{align*}
  J_2 \ge -C - C\sum_{K\in\mathcal{T}}\mathrm{m}(K)h(u_K^k).
\end{align*}
Under Assumption (A5b), the arguments are similar; see the proof of Lemma \ref{lem.aei2}. Collecting the estimates finishes the proof.
\end{proof}
\end{appendix}



\begin{thebibliography}{11}

\bibitem{AFP00} L.~Ambrosio, N.~Fusco, and D.~Pallara. {\em Functions of Bounded Variation and Free Discontinuity Problems}. Oxford University Press, New York, 2000.

\bibitem{AMRT10} F.~Andreu-Vaillo, J.-M.~Maz\'on, J.~Rossi, and J.~Toledo-Melero. {\em Nonlocal Diffusion Problems}. Amer. Math. Soc., Providence, 2010.

\bibitem{BuEs23} M.~Burger and A.~Esposito. Porous medium equation and cross-diffusion systems as limit of nonlocal interaction. {\em Nonlin. Anal.} 235 (2023), no.~113347, 30 pages. 

\bibitem{CHS18} J.~A.~Carrillo, Y.~Huang, and M.~Schmidtchen. Zoology of a nonlocal cross-diffusion model for two species. {\em SIAM J. Appl. Math.} 78 (2018), 1078--1104. 

\bibitem{CCR06} E.~Chasseigne, M.~Chaves, and J.~Rossi. Asymptotic behavior for nonlocal diffusion equations. {\em J. Math. Pures Appl.} 86 (2006), 271--291.

\bibitem{CJS16} C.~Chainais-Hillairait, A.~J\"ungel, and S.~Schuchnigg. Entropy-dissipative discretization of nonlinear diffusion equations and discrete Beckner inequalities. {\em ESAIM: Math. Model. Anal. Numer.} 50 (2016), 135--162.

\bibitem{CDHJ21} L.~Chen, E.~Daus, A.~Holzinger, and A.~J\"ungel. Rigorous derivation of population cross-diffusion systems from moderately interacting particle systems. {\em J. Nonlin. Sci.} 31 (2021), no.~94, 38 pages.

\bibitem{CDJ18} X.~Chen, E.~Daus, and A.~J\"ungel. Global existence analysis of cross-diffusion population systems for multiple species. {\em Arch. Ration. Mech. Anal.} 227 (2018), 715--747.

\bibitem{ChJu19} X.~Chen and A.~J\"ungel. Weak--strong uniqueness of renormalized solutions to reaction--cross-diffusion systems. {\em Math. Models Meth. Appl. Sci.} 29 (2019), 237--270.

\bibitem{Cla87} D.~Clark. Short proof of a discrete Gronwall inequality. {\em Discrete Appl. Math.} 16 (1987), 279--281.

\bibitem{DHPP24} M.~Doumic, and S.~Hecht, B.~Perthame, and D.~Peurichard. Multispecies cross-diffusions: From a nonlocal mean-field to a porous medium system without self-diffusion. {\em J. Differ. Eqs.}
389 (2024), 228--256.

\bibitem{DiMo24} H.~Dietert and A.~Moussa. Persisting entropy structure for nonlocal cross-diffusion systems. {\em Ann. Fac. Sci. Toulouse} 33 (2024), 69--104.

\bibitem{EGH00} R.~Eymard, T.~Gallou\"et, and R.~Herbin. Finite volume methods. In: P.~G.~Ciarlet and J.-L.~Lions (eds.), {\em Handbook of Numerical Analysis} 7 (2000), 713--1018.

\bibitem{DEF18} M.~Di Francesco, A.~Esposito, and S.~Fagioli. Nonlinear degenerate cross-diffusion systems with nonlocal interaction. {\em Nonlin. Anal.} 169 (2018), 94--117.

\bibitem{GaVe19} G.~Galiano and J.~Velasco. Well-posedness of a cross-diffusion population model with nonlocal diffusion. {\em SIAM J. Math. Anal.} 51 (2019), 2884--2902.

\bibitem{GaVe22} G.~Galiano and J.~Velasco. Convergence of solutions of a rescaled evolution nonlocal cross-diffusion problem to its local diffusion counterpart. {\em Rev. Real Acad. Cienc. Exactas Fis. Nat. Ser. A} 116 (2022), no.~93, 17 pages.

\bibitem{JPZ22} A.~J\"ungel, S.~Portisch, and A.~Zurek. Nonlocal cross-diffusion systems for multi-species populations and networks. {\em Nonlin. Anal.} 219 (2022), no.~112800, 26 pages.

\bibitem{JuZa16} A.~J\"ungel and N.~Zamponi. Qualitative behavior of solutions to cross-diffusion systems from population dynamics. {\em J. Math. Anal. Appl.} 440 (2016), 794--809.

\bibitem{JuZu21} A.~J\"ungel and A.~Zurek. A convergent structure-preserving finite-volume scheme for the Shigesada--Kawasaki--Teramoto population system. {\em SIAM J. Numer. Anal.} 59 (2021), 2286--2309.

\bibitem{Lin90} P.~Lindqvist. On the equation $\diver(|\na u|^{p-2}\na u)+\lambda|u|^{p-2}u=0$. {\em Proc. Amer. Math. Soc.} 109 (1990), 157--164.

\bibitem{Mou20} A.~Moussa. From nonlocal to classical Shigesada--Kawasaki--Teramoto systems: triangular case with
bounded coefficients. {\em SIAM J. Math. Anal.} 52 (2020), 42-–64.

\bibitem{SKT79} N.~Shigesada, K.~Kawasaki, and E.~Teramoto. Spatial segregation of interacting species. {\em J. Theor. Biol.} 79 (1979), 83--99.

\bibitem{Tsa88} C.~Tsallis. Possible generalization of Boltzmann–Gibbs statistics. {\em J. Stat. Phys.} 52 (1988), 479--487.

\end{thebibliography}
\end{document}